\theoremstyle{definition}
\newtheorem{thm}{Theorem}[section]
\newtheorem{lem}[thm]{Lemma}
\newtheorem{cor}[thm]{Corollary}
\newtheorem{dfn}[thm]{Definition}
\newtheorem{rem}[thm]{Remark}
\newcommand{\lra}{\longrightarrow}
\newcommand{\mb}{\textbf{mbC}}
\newcommand{\ov}{\overline}
\newcommand{\wh}{\widehat}
\newcommand{\vd}{\mathbf{vD}}
\newcommand{\vdvd}{\vdash_{\vd}}
\newcommand{\modvd}{\models_{\vd}}
\newcommand{\lang}{\mathcal{L}}
\newcommand{\logl}{\mathscr{L}}
\newcommand{\Int}{\mathrm{Int}}
\newcommand{\pow}{\mathcal{P}}
\title{Topological semantics for a non-self-extensional LFI}
\author[1]{Esha Jain}
\author[1]{Sankha S. Basu}
 \affil[1]{Department of Mathematics\\
  Indraprastha Institute of Information Technology-Delhi\\
  New Delhi, India.}
  \date{November 6, 2025}
\begin{document}

\maketitle

\begin{abstract}
In this article, we have introduced a Logic of Formal Inconsistency (LFI) that we call $\vd$. This logic is non-self-extensional, i.e., the replacement property, or the rule for substitution of equivalents, does not hold. A Hilbert-style presentation for the logic has been provided. Then, a topological semantics for $\vd$ has been described, subsequent to which we have established the Soundness and Completeness results for it with respect to this semantics.
\end{abstract}

\textbf{Keywords:} 
Paraconsistency, Logics of Formal Inconsistency (LFIs), Topological Semantics, Replacement

\tableofcontents

\section{Introduction}
The concept of topological semantics for logics is not new (see \cite{Goodman1981,McKinseyTarski1944}). There are several articles that illustrate the relationship between topological properties and logical theories. For example, the relationship between the various separation principles in topology and logical theories is established in \cite{mortensen2000topological}, which is then extended by defining \emph{connected formulas} that are used to establish a link between connected topological spaces and logical theories in \cite{baskent2013}. 

The above-mentioned articles, along with \cite{ConiglioPrieto-Sanabria2017} and \cite{fuenmayor2023semantical}, exemplify the use of topological semantics for \emph{paraconsistent} logics. A logic $\mathscr{L}=\langle\lang,\vdash_{\mathscr{L}}\rangle$ is said to be \emph{paraconsistent} if it invalidates a \emph{principle of logical explosion}. The most common principle of logical explosion is expressed as follows: for all $\alpha,\beta\in\lang$, $\{\alpha,\neg\alpha\}\vdash_{\mathscr{L}}\beta$. This is called \emph{ECQ (ex contradictione sequitor quodlibet)}. $\mathscr{L}$ is said to be paraconsistent (with respect to $\neg$), if ECQ fails in it, i.e., there exist $\alpha,\beta\in\lang$ such that
$\{\alpha,\neg\alpha\}\not\vdash_{\mathscr{L}}\beta$. In other words, negation inconsistency does not lead to absolute inconsistency, or triviality, in such a logic. There are other principles of explosion (see \cite{BasuRoy2024}), which can produce other kinds of paraconsistency. 

Topological semantics for a special class of paraconsistent logics is presented in a recent article \cite{fuenmayor2023semantical}. These paraconsistent logics belong to the class of Logics of Formal Inconsistency (LFIs) that are \emph{self-extensional} in the sense of \cite{Wojcicki1988}. LFIs (see Definition \ref{def:lfi}) are paraconsistent logics in which the explosion law is allowed in a `local' or `controlled' way, as mentioned in \cite{CarnielliConiglio2016}. This is done by having an object-level denotation of consistency via a unary `consistency operator' in the logic.

A \emph{self-extensional} logic is one that obeys the \emph{replacement property} (see Definition \ref{def:rep'ment}). An example of an LFI satisfying the replacement property is $\mathbf{RmbC}$, introduced in \cite{CarnielliConiglioFuenmayor2022}. This is an extension of the well-known LFI $\mathbf{mbC}$, that is not self-extensional. This is discussed in \cite{Carnielli2007} and \cite{CarnielliConiglio2016}. In $\mathbf{mbC}$, the unary connectives $\neg$ and $\circ$ are partially determined, giving rise to non-deterministic semantics and this is what leads to the failure of the replacement property. 

In this paper, we present a new LFI, which we have named as $\vd$, that is not self-extensional and provide a topological semantics for this logic. A distinguishing feature of  $\vd$ is that the connective $\lor$ (disjunction) does not obey the rules for classical disjunction. This is unlike other LFIs. More precisely, $(\alpha\lra\gamma)\lra((\beta\lra\gamma)\lra((\alpha\lor\beta)\lra\gamma))$ is not a theorem of $\vd$. This plays a crucial role in the failure of the replacement property in this logic. Therefore, \textbf{d}isjunction is \textbf{v}ague in this logic.

The article is structured as follows. We introduce the logic $\vd$ in Section \ref{sec:logic} by a Hilbert style presentation. Topological semantics for this logic is described in Section \ref{sec:top}. This is followed by the Soundness theorem for $\vd$ with respect to the topological semantics in Section \ref{sec:soundness}. Section \ref{sec:lfi} shows that $\vd$ is indeed an LFI and is non-self-extensional. Finally, in Section \ref{sec:com'ness}, we establish that $\vd$ is complete with respect to the topological semantics.

\section{The logic \texorpdfstring{$\vd$}{vd}}\label{sec:logic}

Let $\mathcal{L}$ be the set of formulas generated inductively over a denumerable set of variables $V$ using a finite set of connectives or operators, called the \emph{signature}. A logic, with signature $\Sigma$, is then a pair $\mathcal{\mathscr{L}}=\langle\mathcal{L},\vdash_{\mathscr{L}}\rangle$, where $\vdash_{\mathscr{L}}\,\subseteq\pow(\mathcal{L})\times\mathcal{L}$ is the consequence relation of the logic $\mathscr{L}$. The subscript $\logl$ is dropped from $\vdash_\logl$ when there is no ambiguity regarding the logic under discussion.

\begin{dfn}
A logic $\logl=\langle\lang,\vdash\rangle$ is called \emph{Tarskian} if it satisfies the following properties. For every $\Gamma\cup\Delta\cup\{\alpha\}\subseteq\lang$,
    \begin{enumerate}[label=(\roman*)]
        \item if $\alpha\in\Gamma$, then $\Gamma\vdash\alpha$ (\emph{Reflexivity});
        \item if $\Gamma\vdash\alpha$ and $\Gamma\subseteq\Delta$, then $\Delta\vdash\alpha$ (\emph{Monotonicity});
        \item if $\Delta\vdash\alpha$ and $\Gamma\vdash\beta$ for all $\beta\in\Delta$, then $\Gamma\vdash\alpha$ (\emph{Transitivity/ Cut}).
    \end{enumerate}
    $\logl$ is said to be \emph{finitary} if for every $\Gamma\cup\{\alpha\}\subseteq\lang$, $\Gamma\vdash\alpha$ implies that there exists a finite $\Gamma_0\subseteq\Gamma$ such that $\Gamma_0\vdash\alpha$.

    $\logl$ is said to be \emph{structural} if for every $\Gamma\cup\{\alpha\}\subseteq\lang$, $\Gamma\vdash\alpha$ implies that $\sigma[\Gamma]\vdash\sigma(\alpha)$, for every substitution $\sigma$ of formulas for variables.

    The logic $\logl$ is called \emph{standard} if it is Tarskian, finitary and structural.
\end{dfn}

\begin{rem}
    The set of formulas $\lang$ can also be described as the formula algebra over $V$ of some type/ signature. The formula algebra has the universal mapping property for the class of all algebras of the same type as $\lang$ over $V$, i.e., any function $f:V\to A$, where $A$ is the universe of an algebra $\mathbf{A}$ of the same type as $\mathcal{L}$, can be uniquely extended to a homomorphism from $\lang$ to $\mathbf{A}$ (see \cite{FontJansanaPigozzi2003,Font2016} for more details).
    
    A \emph{substitution} can then be defined as any function $\sigma:V\to\lang$ that extends to a unique endomorphism (also denoted by $\sigma$) from $\lang$ to itself via the universal mapping property. The logic $\logl$ is then defined to be structural as above.
\end{rem}

The logic $\vd=\langle\lang,\vdvd\rangle$ is a logic with signature $\Sigma=\{\land,\lor,\lra,\neg,\circ\}$, where $\neg$ and $\circ$ are unary, and the rest are binary operators, and is induced by the following Hilbert-style presentation.
 
 \textsc{Axiom Schema:}\begin{enumerate}[label=(\arabic*)]
     \item \label{ax:i}$\alpha\lra(\beta\lra\alpha)$
     \item \label{ax:ii}$(\alpha\lra(\beta\lra\gamma))\lra ((\alpha\lra\beta)\lra(\alpha\lra\gamma))$
     \item \label{ax:iii}$\alpha\lra(\beta\lra(\alpha\land\beta))$
     \item \label{ax:iv}$(\alpha\land\beta)\lra \alpha$
     \item \label{ax:v}$(\alpha\land\beta)\lra \beta$
     \item\label{ax:vi} $\alpha\lra(\alpha\lor\beta)$
     \item \label{ax:vii}$\beta\lra(\alpha\lor\beta)$
     \item\label{ax:viii}$(\alpha\lra\beta)\lor\alpha$
     \item \label{ax:ix}$\alpha\lor\neg\alpha$
     \item \label{ax:x}$(\alpha\lra\gamma)\lra((\neg\alpha\lra\gamma)\lra((\alpha\lor\neg\alpha)\lra\gamma))$
     \item \label{ax:xi}$((\alpha\lra\beta)\lra\gamma)\lra((\alpha\lra\gamma)\lra(((\alpha\lra\beta)\lor\alpha)\lra\gamma))$
     
     \item \label{ax:bc1}$\circ\alpha\lra(\alpha\lra(\neg\alpha\lra\beta))$

     \item \label{ax:ciw}$\circ\alpha\lor(\alpha\land\neg\alpha)$
     \item \label{ax:xiv}$(\circ\alpha\lra\gamma)\lra(((\alpha\land\neg\alpha)\lra\gamma)\lra((\circ\alpha\lor(\alpha\land\neg\alpha))\lra\gamma))$
\end{enumerate}

Using a similar method as in \cite{CarnielliConiglio2016}, a bottom formula can be defined as follows.

For any formula $\beta$, a bottom formula $\bot:=\beta \land (\neg\beta\land \circ \beta)$. That this is indeed a bottom particle can be justified by using axiom \ref{ax:bc1}. Then, using $\bot$ and $\lra$, we can define a unary operator $\sim$, which behaves as a classical negation in $\vd$. Then we have the following axioms involving $\sim$.

\begin{enumerate}[resume, label=(\arabic*)] 
     \item \label{ax:xv}$\sim\neg\alpha\lra\sim\neg\sim\neg\alpha$
     \item \label{ax:xvi}$\sim\neg\sim\neg\alpha\lra\sim\neg\alpha$
     \item \label{ax:xvii}$\sim\neg(\alpha\land\beta)\lra\sim\neg\alpha\land\sim\neg\beta$
     \item \label{ax:xviii}$\sim\neg\alpha\land\sim\neg\beta\lra\sim\neg(\alpha\land\beta)$
\end{enumerate}

 \textsc{Inference Rules:}
 \begin{enumerate}[label=(\arabic*)]
     \item \label{rule:mp} $\begin{array}{c}
        \alpha\qquad\alpha\lra\beta\\
        \hline
        \beta
     \end{array}\;$(MP)
     
     \item \label{rule:neg}$\begin{array}{c}
         \alpha\\
         \hline
         \neg\alpha\lra\sim\alpha
     \end{array},\;\hbox{provided $\alpha$ is a theorem (see Definition \ref{def:derivation})}$. 
\end{enumerate}

The fact that $\vd$ is induced by a Hilbert-style presentation, i.e., it is a Hilbert-style logic is captured in the following definitions of \emph{syntactic derivation} and \emph{syntactic entailment} in $\vd$.

\begin{rem}
    The axioms \ref{ax:i}-\ref{ax:viii} in the above Hilbert-style presentation of $\vd$ are theorems of positive classical propositional logic (CPL$^+$) and all extensions of the LFI $\mb$ as discussed in \cite{CarnielliConiglio2016}. The axioms \ref{ax:bc1} and \ref{ax:ciw} are called bc1 and ciw, respectively, in \cite{CarnielliConiglio2016}. The axiom \ref{ax:bc1} is also called the \emph{gentle explosion law}.
\end{rem}

\begin{dfn}\label{def:derivation}
Let $\Gamma\cup\{\varphi\}\subseteq\lang$. A \emph{derivation} of $\varphi$ from $\Gamma$ in $\vd$ is a finite sequence $(\varphi_{1},\ldots,\varphi_{n})$ of elements in $\lang$, where $\varphi_{n}=\varphi$ and for each $1\le i\le n$, 

\begin{enumerate}[label=(\roman*)]
    \item $\varphi_{i}$ is an instance of an axiom of $\vd$, or
    \item $\varphi_{i}\in \Gamma$, or
    \item there exist $1\le j,k<i$ such that $\varphi_{i}$ is obtained from $\varphi_{j},\varphi_{k}$ by MP.
    \item there exists  $j<i$ such that $\varphi_j$ is a \emph{theorem} and $\varphi_{i}$ is the result of an application of the second inference rule on $\varphi_j$.
\end{enumerate}
We say that $\varphi$ is \emph{syntactically derivable} or \emph{syntactically entailed} from $\Gamma$, and write $\Gamma\vdvd\varphi$, if there is a derivation of $\varphi$ from $\Gamma$. If $\Gamma=\emptyset$, then $\varphi$ is called \textit{theorem}, i.e., $\varphi$ is a theorem if it can be derived using only axioms and rules.
\end{dfn}

\begin{rem}
    The clause ``$\alpha$ is a theorem'' is needed in the second rule of inference (Rule \ref{rule:neg}) in the above Hilbert system, since otherwise, we have the following derivation from $\{\alpha,\neg\alpha\}$, where $\alpha\in\lang$ is any formula.
    \[
    \begin{array}{rlll}
         \{\alpha,\neg\alpha\}\vdvd&1.&\alpha&[\hbox{Hypothesis}]\\
         &2.&\neg\alpha\lra\sim\alpha&[\hbox{Unrestricted use of Rule \ref{rule:neg}}]\\
         &3.&\neg\alpha&[\hbox{Hypothesis}]\\
         &4.&\sim\alpha&[\hbox{MP on (2) and (3)}]\\
         &5.&\alpha\lra\bot&[\hbox{Definition of $\sim$}]\\
         &6.&\bot&[\hbox{MP on (1) and (5)}]\\
         &7.&\beta\land(\neg\beta\land\circ\beta)&[\hbox{Definition of $\bot$; $\beta$: any formula}]\\
         &8.&(\beta\land(\neg\beta\land\circ\beta))\lra\beta&[\hbox{Axiom \ref{ax:iv}}]\\
         &9.&\beta&[\hbox{MP on (7) and (8)}]
    \end{array}
    \]
    Thus, $\{\alpha,\neg\alpha\}\vdvd\beta$ for any $\alpha,\beta\in\lang$. In other words, ECQ holds, with respect to $\neg$, in $\vd$. This, however, cannot be allowed. Hence, Rule \ref{rule:neg} can only be applied on an $\alpha$ that is a theorem. The above arguments also point out that $\{\alpha,\neg\alpha\}$ explodes, if $\alpha$ is a theorem.
\end{rem}

\begin{rem}\label{rem:vd-Tarskian/finitary}
    It follows from the above definition of a syntactic consequence in $\vd$, that $\vd$ is a Tarskian, finitary and structural logic. 
\end{rem}

\begin{thm}\label{thm:DedThm}
    The Deduction theorem holds in $\vd=\langle\lang,\vdvd\rangle$, i.e., for any $\Gamma\cup\{\alpha,\beta\}\subseteq\lang$, $\Gamma\cup\{\alpha\}\vdvd\beta$ iff $\Gamma\vdvd\alpha\lra\beta$.
\end{thm}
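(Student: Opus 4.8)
The plan is to prove both directions separately. The converse direction (from $\Gamma \vdvd \alpha \lra \beta$ to $\Gamma \cup \{\alpha\} \vdvd \beta$) is the easy one: given a derivation of $\alpha \lra \beta$ from $\Gamma$, I would append $\alpha$ as a hypothesis and apply MP (Rule \ref{rule:mp}) to obtain $\beta$, using monotonicity (Remark \ref{rem:vd-Tarskian/finitary}) to pass from $\Gamma$ to $\Gamma \cup \{\alpha\}$. This needs no special care.

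The forward direction is the substantive one, proved by induction on the length $n$ of a derivation $(\varphi_1, \ldots, \varphi_n = \beta)$ of $\beta$ from $\Gamma \cup \{\alpha\}$. The goal is to show $\Gamma \vdvd \alpha \lra \varphi_i$ for every $i$. The base cases split according to how $\varphi_i$ enters the derivation. If $\varphi_i$ is an axiom, is a member of $\Gamma$, or equals $\alpha$ itself, the standard Hilbert-style argument applies: in the first two cases I derive $\varphi_i$ and then use Axiom \ref{ax:i} ($\varphi_i \lra (\alpha \lra \varphi_i)$) with MP; in the case $\varphi_i = \alpha$, I need $\vdvd \alpha \lra \alpha$, which follows from Axioms \ref{ax:i} and \ref{ax:ii} in the usual way. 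For the inductive step where $\varphi_i$ arises from $\varphi_j, \varphi_k = \varphi_j \lra \varphi_i$ by MP, I combine the induction hypotheses $\Gamma \vdvd \alpha \lra \varphi_j$ and $\Gamma \vdvd \alpha \lra (\varphi_j \lra \varphi_i)$ and discharge them through Axiom \ref{ax:ii} with two applications of MP. All of this rests only on Axioms \ref{ax:i}, \ref{ax:ii} and MP, which are present here exactly as in classical Hilbert calculi.

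The main obstacle, and the point requiring genuine care, is the fourth derivation clause: the restricted negation rule (Rule \ref{rule:neg}), which from a \emph{theorem} $\varphi_j$ infers $\varphi_i = (\neg\varphi_j \lra \mathord{\sim}\varphi_j)$. Here I cannot naively try to prove $\Gamma \vdvd \alpha \lra \varphi_i$ by discharging a hypothesis, because the rule's applicability hinges on $\varphi_j$ being a theorem, not merely derivable from $\Gamma \cup \{\alpha\}$. The key observation that saves the argument is precisely this restriction: if $\varphi_j$ is a theorem, then $\varphi_i = (\neg\varphi_j \lra \mathord{\sim}\varphi_j)$ is itself obtained by a legitimate application of Rule \ref{rule:neg} from $\emptyset$, hence $\varphi_i$ is a theorem, so $\vdvd \varphi_i$ and a fortiori $\Gamma \vdvd \varphi_i$. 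I then reach $\Gamma \vdvd \alpha \lra \varphi_i$ via Axiom \ref{ax:i} and MP, exactly as in the axiom case. Thus the theoremhood side condition on Rule \ref{rule:neg}, which the preceding remark showed to be indispensable for paraconsistency, is also exactly what makes this rule behave like an axiom for the purposes of the induction, so no difficulty actually arises once the case is set up correctly.
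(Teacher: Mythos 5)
Your proof is correct and is essentially the paper's own argument: the paper's proof is the single sentence that the result is ``standard in the presence of axioms \ref{ax:i} and \ref{ax:ii} and MP as a rule of inference,'' i.e., exactly the Hilbert-style induction on derivation length that you spell out. Your treatment of the Rule \ref{rule:neg} case---noting that its theoremhood side condition forces its conclusion $\neg\varphi_j\lra\,\sim\varphi_j$ to itself be a theorem, so that it can be handled like an axiom via Axiom \ref{ax:i} and MP---is precisely the detail that the paper's terse appeal to ``standard'' silently relies on, and you resolve it correctly.
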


\begin{proof}
This is standard in the presence of axioms \ref{ax:i} and \ref{ax:ii} and MP as a rule of inference.
\end{proof}

The next theorem shows that $\sim$ is explosive  and the law of excluded middle in terms of $\sim$ holds in $\vd$.

\begin{thm}\label{thm:prop_cl_neg}
  The following are theorems of $\vd$. 
  \begin{enumerate}[label=(\roman*)]
    \item \label{ax:expl} $\vdvd\alpha\lra(\sim\alpha\lra\beta)$.
    \item \label{ax:lem}
    $\vdvd\sim\alpha\lor\alpha$
    \end{enumerate}
\end{thm}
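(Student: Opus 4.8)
The plan is to treat the two parts separately, using the definitions $\sim\alpha:=\alpha\lra\bot$ and $\bot:=\varphi\land(\neg\varphi\land\circ\varphi)$ for a fixed formula $\varphi$, together with the Deduction theorem (Theorem~\ref{thm:DedThm}).

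Part~\ref{ax:lem} should be immediate. Since $\sim\alpha$ is by definition the formula $\alpha\lra\bot$, the disjunction $\sim\alpha\lor\alpha$ is literally $(\alpha\lra\bot)\lor\alpha$, and this is exactly the instance of axiom~\ref{ax:viii} obtained by substituting $\bot$ for the second metavariable. So no work is needed beyond recording this substitution.

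For part~\ref{ax:expl}, the plan is first to show that $\bot$ is genuinely explosive, i.e.\ $\vdvd\bot\lra\beta$ for every $\beta$, and then to assemble the statement via the Deduction theorem. To see that $\bot$ explodes, I would begin from $\bot=\varphi\land(\neg\varphi\land\circ\varphi)$ and apply the conjunction-elimination axioms~\ref{ax:iv} and~\ref{ax:v} (with MP) to extract the conjuncts $\varphi$, $\neg\varphi$ and $\circ\varphi$. The gentle explosion law~\ref{ax:bc1}, namely $\circ\varphi\lra(\varphi\lra(\neg\varphi\lra\beta))$, followed by three applications of MP (on $\circ\varphi$, then $\varphi$, then $\neg\varphi$), then yields $\beta$; discharging the hypothesis $\bot$ by the Deduction theorem gives $\vdvd\bot\lra\beta$. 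This explosion argument is the only real content of the proof.

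To finish part~\ref{ax:expl}, I would apply the Deduction theorem twice to reduce the goal $\vdvd\alpha\lra(\sim\alpha\lra\beta)$ to the derivation $\{\alpha,\sim\alpha\}\vdvd\beta$: here $\alpha$ together with $\sim\alpha=\alpha\lra\bot$ yields $\bot$ by MP, and then $\bot\lra\beta$ with a further MP yields $\beta$. Since this derivation uses only hypotheses, axioms and MP, and never invokes the restricted rule~\ref{rule:neg}, the double application of the Deduction theorem is unproblematic. I do not expect a genuine obstacle in either part; the only point needing care is to treat $\bot$ as a single fixed formula throughout, so that the metavariable $\beta$ of the theorem is kept distinct from the placeholder $\varphi$ buried inside $\bot$.
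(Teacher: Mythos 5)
Your proposal is correct and follows essentially the same route as the paper: part~\ref{ax:lem} is the same instantiation of axiom~\ref{ax:viii}, and part~\ref{ax:expl} is the same argument (unfold $\sim$ and $\bot$, extract the conjuncts via axioms~\ref{ax:iv} and~\ref{ax:v}, explode via axiom~\ref{ax:bc1}, and discharge with the Deduction theorem twice), merely modularized through the intermediate lemma $\vdvd\bot\lra\beta$ rather than done in a single derivation from $\{\alpha,\sim\alpha\}$. If anything, your version is slightly tidier, since you apply MP to axiom~\ref{ax:bc1} in its literal antecedent order ($\circ\varphi$, then $\varphi$, then $\neg\varphi$), whereas the paper's derivation cites axiom~\ref{ax:bc1} for a permuted form of it.
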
    

\begin{proof}
\begin{enumerate}[label=(\roman*)]
    \item 
    We can  construct the following derivation of $\beta$ from $\{\alpha,\sim\alpha\}$.
    \[
    \begin{array}{rll}
    1.&\alpha&[\hbox{Hypothesis}]\\
    2.&\sim\alpha&[\hbox{Hypothesis}]\\
    3.&\alpha\lra\bot&[\hbox{Definition of $\sim$}]\\
    4.&\bot&[\hbox{MP on (1) and (2)}]\\
    5.&\alpha\land(\neg\alpha\land\circ\alpha)&[\hbox{Definition of $\bot$}]\\
    6.&(\alpha\land(\neg\alpha\land\circ\alpha))\lra(\neg\alpha\land\circ\alpha)&[\hbox{Axiom \ref{ax:v}}]\\
    7.&(\neg\alpha\land\circ\alpha)&[\hbox{MP on (5) and (6)}]\\
    8.&(\neg\alpha\land\circ\alpha)\lra\neg\alpha& [\hbox{Axiom \ref{ax:iv}}]\\
    9.&\neg\alpha&[\hbox{MP on (7) and (8)}]\\
    10. &(\neg\alpha\land\circ\alpha)\lra\circ\alpha& [\hbox{Axiom \ref{ax:v}}]\\
    11.&\circ\alpha&[\hbox{MP on (7) and (10)}]\\ 12.&\alpha\lra(\neg\alpha\lra(\circ\alpha\lra\beta))&[\hbox{Axiom \ref{ax:bc1}}]\\
    13.&\neg\alpha\lra(\circ\alpha\lra\beta)&[\hbox{MP on (1) and (12)}]\\
    14.&\circ\alpha\lra\beta&[\hbox{MP on (9) and (13)}]\\
    15.&\beta&[\hbox{MP on (11) and (14)})
     \end{array}
     \]
Thus, $\{\alpha,\sim\alpha\}\vdvd\beta$.
Hence, by applying the Deduction theorem (Theorem \ref{thm:DedThm}) twice, we have $\vdvd\alpha\lra(\sim\alpha\lra\beta)$.
 
\item By Axiom \ref{ax:viii}, $\vdvd(\alpha\lra\bot)\lor\alpha$. Then, by definition of $\sim$, we have 
$\vdvd\sim\alpha\lor\alpha$.
\end{enumerate}
\end{proof}
    
\begin{rem}
    The above theorem shows that $\sim$ indeed behaves like Boolean negation in $\vd$. 

    The law of excluded middle holds for both negations - $\neg$ and $\sim$.
\end{rem}
 
\section{Topological semantics}\label{sec:top}
In this section, we describe a topological semantics for the logic $\vd=\langle\lang,\vdvd\rangle$ described in the previous section. Given a topological space $\langle X,\tau\rangle$ and $P\subseteq X$, we will denote the interior, closure, and complement of $P$ by $\Int(P),\; \ov{P}$, and $P^{c}$, respectively.
  \begin{dfn}\label{def:val'on}
    A \emph{topological structure}  for $\vd$ is a topological space $\mathcal{T}=\langle X,\tau\rangle$. A \emph{topological model} for $\vd$ is a pair $\mathcal{M}=\langle\mathcal{T},v\rangle$, where $\mathcal{T}=\langle X,\tau\rangle$ is a topological structure for $\vd$ and $v:\lang\lra\pow(X)$ is a function, called a \emph{valuation}, that satisfies the following conditions.
    \begin{enumerate}[label=(\arabic*)]
        \item $v(\alpha\lra\beta)=v(\alpha)^{c}\cup v(\beta)$;
        \item $v(\neg\alpha)=\overline{v(\alpha)^{c}}$;
        \item $v(\sim \alpha)=v(\alpha)^{c}$;
        
        \item $v(\alpha\land\beta)=v(\alpha)\cap v(\beta)$;
        \item $v(\alpha\lor\beta)$ is such that $v(\alpha)\cup v(\beta)\subseteq v(\alpha \lor \beta)$; 
        \item $v(\circ \alpha)=v(\alpha)^{c}\cup\mathrm{Int}(v(\alpha))$.
    \end{enumerate}
\end{dfn}

\begin{rem}
    The definition of $v$ for the connectives $\neg$, $\lra$ and $\land$ in $\vd$ are the same as for the logic $\mathbf{LTop}$ given in \cite{ConiglioPrieto-Sanabria2017}. The Boolean negation $\sim$ is also interpreted as expected. We, however, differ in the semantic condition for $\lor$. 
    
    Perhaps, it is intuitively natural to have $v(\circ \alpha)=\mathrm{Int}(v(\alpha)^{c})\cup\mathrm{Int}(v(\alpha))$, which is the complement of the boundary of $v(\alpha)$. This is the case for the logic $\mathbf{LTop}$ in \cite{ConiglioPrieto-Sanabria2017}. We, however, differ from that here and have $v(\circ\alpha)=v(\alpha)^{c}\cup\mathrm{Int}(v(\alpha))$. This can be traced back to the suggested interpretation of the operator in \cite{marcos2005}.  One could also try other possible valuations.
\end{rem}

\begin{rem}
    It is clear from the above definition that the operators $\lra,\;\neg,\;\sim,\;\land,\;\circ$ are fully determined. The operator $\lor$, on the other hand, is only partially determined. Suppose $\mathcal{M}=\langle\mathcal{T},v\rangle$ is a topological model for $\vd$, where $\mathcal{T}=\langle X,\tau\rangle$ is a topological structure for $\vd$ and $v$ is a valuation as defined above. Then, for any $\alpha,\beta\in\lang$, $v(\alpha\lor\beta)$ cannot be determined from $v(\alpha),v(\beta)$ unless $v(\alpha)\cup v(\beta)=X$. $v(\alpha\lor\beta)$ can take any value such that $v(\alpha)\cup v(\beta)\subseteq v(\alpha\lor\beta)\subseteq X$.
\end{rem}

We now define the semantic consequence relation for $\vd$ in terms of the above topological interpretation.

\begin{dfn}\label{def:sem'tic truth}
Suppose $\Gamma\cup\{\alpha\}\subseteq\lang$.

\begin{enumerate}[label=(\alph*)]
    \item $\alpha$ is said to be \emph{true} in a topological model  $\mathcal{M}=\langle\mathcal{T},v\rangle$ for $\vd$, where $\mathcal{T}=\langle X,\tau\rangle$ is a topological structure for $\vd$ and $v$ is a valuation, if $v(\alpha)=X$. This is written as $\mathcal{M}\modvd\alpha$.
    \item $\alpha$ is said to be \emph{valid}, if  $\mathcal{M}\modvd\alpha $, for every topological model $\mathcal{M}$ for $\vd$. This is denoted by $\modvd\alpha$.
    \item $\alpha$ is said to be a \emph{semantical consequence} of $\Gamma$ in $\vd$, if either $\modvd\alpha$, or there exists a finite non-empty $\Gamma_{0}\subseteq\Gamma$ such that $\displaystyle\bigcap_{\gamma\in \Gamma_{0}}v(\gamma)\subseteq v(\alpha)$, in every topological model $\mathcal{M}=\langle\mathcal{T},v\rangle$ for $\vd$. This is written as $\Gamma\modvd\alpha$.
\end{enumerate}
\end{dfn}

The following lemma provides a convenient alternative method for establishing the truth of a conditional in a topological model for $\vd$. This will be used frequently to prove various results later in the article.

\begin{lem}\label{lem:impl'subset}
    Suppose $\mathcal{M}=\langle\mathcal{T},v\rangle$ be a topological model for $\vd$, where $\mathcal{T}=\langle X,\tau\rangle$ is a topological structure for $\vd$ and $v$ is a valuation. Then, for any $\alpha,\beta\in\lang$, $v(\alpha\lra\beta)=X$ iff $v(\alpha)\subseteq v(\beta)$.
\end{lem}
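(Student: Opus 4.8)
The plan is to reduce the biconditional entirely to the defining clause for $\lra$ in a valuation and then to an elementary fact about complements and unions inside the powerset lattice of $X$. By condition (1) of Definition \ref{def:val'on}, $v(\alpha\lra\beta)=v(\alpha)^{c}\cup v(\beta)$, so the claim $v(\alpha\lra\beta)=X$ becomes $v(\alpha)^{c}\cup v(\beta)=X$, and everything to prove is that this set-theoretic identity is equivalent to the inclusion $v(\alpha)\subseteq v(\beta)$. The whole argument therefore takes place in $\pow(X)$, where $X$ is the top element, so the inclusion $v(\alpha)^{c}\cup v(\beta)\subseteq X$ is automatic and only the reverse inclusion carries content.

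For the forward direction, I would assume $v(\alpha)^{c}\cup v(\beta)=X$ and chase an arbitrary element: if $x\in v(\alpha)$ then $x\notin v(\alpha)^{c}$, yet $x\in X=v(\alpha)^{c}\cup v(\beta)$ forces $x\in v(\beta)$, giving $v(\alpha)\subseteq v(\beta)$. For the converse, assuming $v(\alpha)\subseteq v(\beta)$, I would take an arbitrary $x\in X$ and split on whether $x\in v(\alpha)$: if not, then $x\in v(\alpha)^{c}$; if so, then $x\in v(\beta)$ by the hypothesised inclusion. In either case $x\in v(\alpha)^{c}\cup v(\beta)$, so $X\subseteq v(\alpha)^{c}\cup v(\beta)$, and combined with the trivial reverse inclusion this yields the desired equality $v(\alpha)^{c}\cup v(\beta)=X$, i.e.\ $v(\alpha\lra\beta)=X$.

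I do not anticipate any genuine obstacle here: the lemma is a direct unfolding of the semantic clause followed by the standard equivalence $A^{c}\cup B=X\iff A\subseteq B$ for subsets of $X$. The only point that warrants a word of care is that the identity is stated relative to the fixed carrier set $X$ (so that $v(\alpha)^{c}$ means $X\setminus v(\alpha)$ and $X$ is the ambient universe), which is exactly the setting fixed by the topological model $\mathcal{M}=\langle\mathcal{T},v\rangle$ with $\mathcal{T}=\langle X,\tau\rangle$. No topological structure beyond the underlying set is needed, so the proof uses none of the conditions on $\tau$ and relies solely on clause (1) of the valuation.
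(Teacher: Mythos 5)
Your proof is correct and takes essentially the same approach as the paper: both unfold clause (1) of Definition \ref{def:val'on} and argue elementwise that $v(\alpha)^{c}\cup v(\beta)=X$ iff $v(\alpha)\subseteq v(\beta)$. The only cosmetic difference is that the paper proves the converse by contradiction, whereas you argue it directly by a case split on $x\in v(\alpha)$; the content is identical.
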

    
\begin{proof} 
    Suppose $v(\alpha\lra\beta)=X$. Then, $v(\alpha)^{c}\cup v(\beta)=X$. Let $a\in v(\alpha)$. So, $a\notin v(\alpha)^c$. Since $v(\alpha)^c\cup v(\beta)=X$, this implies that $a\in v(\beta)$. Thus, $v(\alpha)\subseteq v(\beta)$.

    Conversely, suppose $v(\alpha)\subseteq v(\beta)$ and let, if possible, $v(\alpha\lra\beta)\neq X$. Then, there exists $a\in X $ such that $a\notin v(\alpha\lra\beta)=v(\alpha)^{c}\cup v(\beta)$. So, $a\notin v(\alpha)^{c}$ and $a\notin v(\beta)$. Since $a\notin v(\alpha)^c$, $a\in v(\alpha)$. Then, as $v(\alpha) \subseteq v(\beta)$, $a\in v(\beta)$. This is a contradiction. Thus, $v(\alpha\lra\beta)=X$.
\end{proof}

\section{Soundness}\label{sec:soundness}
This section is devoted to showing that the logic $\vd$ is sound with respect to the topological semantics described in the previous section. We first prove the following topological property that will come in handy later in the proof of soundness.

\begin{lem}\label{lem:compclcomp}
    Suppose $\langle X,\tau\rangle$ is a topological space and $P\subseteq X$. Then, $\left(\ov{P^c}\right)^c=\Int(P)$.
\end{lem}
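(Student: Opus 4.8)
The plan is to prove the set identity $\left(\ov{P^c}\right)^c=\Int(P)$ by appealing to the standard duality between closure and interior operators in a topological space, namely that the interior of a set is the complement of the closure of its complement. Since the statement is purely topological and involves no logic-specific machinery, I would verify it directly from the definitions of closure and interior rather than invoking the duality as a black box, so that the proof is self-contained.

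First I would recall that for any subset $Q\subseteq X$, the closure $\ov{Q}$ is the smallest closed set containing $Q$, equivalently the intersection of all closed sets containing $Q$, while $\Int(Q)$ is the largest open set contained in $Q$, equivalently the union of all open sets contained in $Q$. The key bridge is the complementation correspondence: a set is open if and only if its complement is closed. I would apply this with $Q=P^c$. Taking complements of the closed supersets of $P^c$ yields exactly the open subsets of $P$, since $A$ is closed and $P^c\subseteq A$ precisely when $A^c$ is open and $A^c\subseteq P$.

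Concretely, I would argue as follows. By definition, $\ov{P^c}=\bigcap\{A : A \text{ is closed and } P^c\subseteq A\}$. Taking complements and using De Morgan's law for arbitrary intersections, $\left(\ov{P^c}\right)^c=\bigcup\{A^c : A \text{ is closed and } P^c\subseteq A\}$. Now $A$ closed is equivalent to $A^c$ open, and $P^c\subseteq A$ is equivalent to $A^c\subseteq P$ (by taking complements of both sides). Hence, writing $U=A^c$, the right-hand side becomes $\bigcup\{U : U \text{ is open and } U\subseteq P\}=\Int(P)$, which is the desired conclusion.

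I do not anticipate a genuine obstacle here, as this is a routine point-set topology fact; the only care needed is to state the set-builder manipulations cleanly and to invoke De Morgan's law for an arbitrary (possibly infinite) family correctly. An alternative, equally clean route would be a pointwise argument: a point $x$ lies in $\left(\ov{P^c}\right)^c$ iff $x\notin\ov{P^c}$, iff there is an open neighbourhood $U$ of $x$ disjoint from $P^c$ (i.e.\ $U\subseteq P$), iff $x\in\Int(P)$. Either formulation completes the proof in a few lines.
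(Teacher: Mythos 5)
Your proof is correct, but it is organized differently from the paper's. You establish the identity in a single chain of equalities: write $\ov{P^c}$ as the intersection of all closed supersets of $P^c$, apply De Morgan's law for arbitrary families to get $\left(\ov{P^c}\right)^c=\bigcup\{A^c : A \text{ closed},\ P^c\subseteq A\}$, and then re-index via the complementation bijection to recognize this union as $\bigcup\{U : U \text{ open},\ U\subseteq P\}=\Int(P)$. The paper instead proves the two inclusions separately and more "by hand": for $\left(\ov{P^c}\right)^c\subseteq\Int(P)$ it uses extensivity ($P^c\subseteq\ov{P^c}$) together with the fact that $\left(\ov{P^c}\right)^c$ is an open subset of $P$ and $\Int(P)$ is the largest such; for the reverse inclusion it runs a pointwise neighbourhood argument (given $x\in\Int(P)$, pick open $U$ with $x\in U\subseteq P$, deduce $\ov{P^c}\subseteq U^c$, hence $U\subseteq\left(\ov{P^c}\right)^c$). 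The pointwise alternative you sketch at the end is essentially the paper's second inclusion. What your route buys is brevity and symmetry — both directions fall out of one duality computation — at the cost of invoking the intersection/union characterizations of closure and interior plus infinitary De Morgan; the paper's route uses only the extremal ("smallest closed / largest open") definitions and monotonicity, so it is slightly more elementary in its prerequisites, though longer. Both are complete and correct proofs of the lemma.
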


\begin{proof}
     Since $P^c\subseteq\ov{P^c}$, we have $\left(\ov{P^c}\right)^c\subseteq \left(P^c\right)^c=P$. Then, as $\left(\ov{P^c}\right)^c$ is an open set and $\Int(P)$ is the largest open set contained in $P$, we have $\left(\ov{P^c}\right)^c\subseteq\Int(P)$. 
     
     Now, suppose $x\in\Int(P)$. Then, there exists an open set $U\subseteq X$ such that $x\in U\subseteq P$. This implies that $P^c\subseteq U^c$, and so, $\ov{P^c}\subseteq\ov{U^c}$. Since $U^c$ is a closed set, $\ov{U^c}=U^c$. Hence, $\ov{P^c}\subseteq U^c$, which implies that $U\subseteq\left(\ov{P^c}\right)^c$. So, $x\in\left(\ov{P^c}\right)^c$, and hence, $\Int(P)\subseteq\left(\ov{P^c}\right)^c$. Thus, $\left(\ov{P^c}\right)^c=\Int(P)$.
\end{proof}

Next, we establish that the axioms of $\vd$ are valid and that the rules of inference preserve truth in the following two lemmas.

\begin{lem}\label{lem:ax-sound}
    Suppose $\varphi$ is an instance of an axiom of $\vd=\langle\lang,\vdash\rangle$ described in Section \ref{sec:logic}. Then, $\varphi$ is valid in $\vd$, i.e., $\modvd\varphi$. 
\end{lem}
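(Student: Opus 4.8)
The plan is to verify each of the fourteen (plus four) axiom schemas individually, showing that in an arbitrary topological model $\mathcal{M}=\langle\mathcal{T},v\rangle$ the valuation assigns the whole space $X$ to each instance. By Lemma \ref{lem:impl'subset}, any axiom of the form $\alpha\lra\beta$ reduces to checking the set-inclusion $v(\alpha)\subseteq v(\beta)$, and iterated implications unwind the same way; this converts most axioms into routine Boolean/topological containment facts about the operators $\cup,\cap,(\cdot)^c,\Int,\ov{(\cdot)}$. So for each axiom I would translate the syntactic form into its semantic counterpart via the valuation clauses in Definition \ref{def:val'on} and then discharge the resulting inclusion.

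First I would handle axioms \ref{ax:i}--\ref{ax:vii}, the positive CPL$^+$ fragment: these are purely Boolean once one recalls $v(\alpha\lra\beta)=v(\alpha)^c\cup v(\beta)$, $v(\alpha\land\beta)=v(\alpha)\cap v(\beta)$, and the crucial containment $v(\alpha)\cup v(\beta)\subseteq v(\alpha\lor\beta)$; note that axioms \ref{ax:vi} and \ref{ax:vii} use only the $\subseteq$ direction of the disjunction clause, which is exactly what the partial determination of $\lor$ guarantees. Axiom \ref{ax:viii}, $(\alpha\lra\beta)\lor\alpha$, needs $v(\alpha)^c\cup v(\beta)\cup v(\alpha)\subseteq v((\alpha\lra\beta)\lor\alpha)$, and since the left side already contains $v(\alpha)^c\cup v(\alpha)=X$, validity is immediate. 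The genuinely topological axioms are \ref{ax:ix}, \ref{ax:x}, \ref{ax:bc1}, \ref{ax:ciw}, \ref{ax:xiv}, and the four $\sim\neg$ axioms \ref{ax:xv}--\ref{ax:xviii}. For \ref{ax:ix} I would compute $v(\alpha)\cup v(\neg\alpha)=v(\alpha)\cup\ov{v(\alpha)^c}\supseteq v(\alpha)\cup v(\alpha)^c=X$, then invoke the $\lor$-clause. For \ref{ax:bc1} (gentle explosion) I would unwind $v(\circ\alpha\lra(\alpha\lra(\neg\alpha\lra\beta)))$ and show $v(\circ\alpha)\cap v(\alpha)\cap v(\neg\alpha)\subseteq v(\beta)$; using $v(\circ\alpha)=v(\alpha)^c\cup\Int(v(\alpha))$ and $v(\neg\alpha)=\ov{v(\alpha)^c}$, the intersection $v(\alpha)\cap(\,v(\alpha)^c\cup\Int(v(\alpha))\,)\cap\ov{v(\alpha)^c}=\Int(v(\alpha))\cap\ov{v(\alpha)^c}$, and Lemma \ref{lem:compclcomp} gives $\Int(v(\alpha))=\bigl(\ov{v(\alpha)^c}\bigr)^c$, so this intersection is empty and contained in every $v(\beta)$. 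For \ref{ax:ciw} I need $v(\circ\alpha)\cup v(\alpha\land\neg\alpha)=X$; since $v(\alpha\land\neg\alpha)=v(\alpha)\cap\ov{v(\alpha)^c}$ and $v(\circ\alpha)=v(\alpha)^c\cup\Int(v(\alpha))$, I would again use Lemma \ref{lem:compclcomp} to show the union is all of $X$, then apply the $\lor$-clause.

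The axioms \ref{ax:x}, \ref{ax:xi}, and \ref{ax:xiv} are the proof-by-cases schemas of the form $(\alpha\lra\gamma)\lra((\psi\lra\gamma)\lra((\alpha\lor\psi)\lra\gamma))$; semantically these demand that whenever $v(\alpha)\subseteq v(\gamma)$ and $v(\psi)\subseteq v(\gamma)$ one has $v(\alpha\lor\psi)\subseteq v(\gamma)$, which would ordinarily fail for an arbitrary superset $v(\alpha\lor\psi)$ of $v(\alpha)\cup v(\psi)$. These hold only because in each case the disjunction is instantiated at a pair whose union is forced to be all of $X$ --- namely $v(\alpha)\cup v(\neg\alpha)=X$ for \ref{ax:x}, $v(\alpha\lra\beta)\cup v(\alpha)=X$ for \ref{ax:xi}, and $v(\circ\alpha)\cup v(\alpha\land\neg\alpha)=X$ for \ref{ax:xiv} --- so that the partial-determination constraint pins $v(\alpha\lor\psi)=X$, and then $v(\gamma)\supseteq X$ follows from the two hypotheses covering $X$. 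This is precisely the point where the vagueness of $\lor$ is compatible with validity, and I expect these three to be the main obstacle: one must carefully argue, in the general case (not assuming $v(\alpha)\subseteq v(\gamma)$ etc.\ hold outright but working pointwise through the nested implications), that a point failing the conclusion would have to lie outside both antecedent disjuncts, contradicting the forced equality with $X$. Finally, the $\sim\neg$ axioms \ref{ax:xv}--\ref{ax:xviii} reduce, via $v(\sim\neg\alpha)=\bigl(\ov{v(\alpha)^c}\bigr)^c=\Int(v(\alpha))$ (Lemma \ref{lem:compclcomp}), to the idempotence $\Int(\Int(P))=\Int(P)$ for \ref{ax:xv}--\ref{ax:xvi} and to the distribution $\Int(P\cap Q)=\Int(P)\cap\Int(Q)$ for \ref{ax:xvii}--\ref{ax:xviii}, both of which are standard properties of the interior operator; I would state these and apply them directly.
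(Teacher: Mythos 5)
Your proposal is correct and follows essentially the same route as the paper: an axiom-by-axiom verification that translates each schema into set-theoretic inclusions via the valuation clauses and Lemma \ref{lem:impl'subset}, using Lemma \ref{lem:compclcomp} for the $\circ$ and $\sim\neg$ axioms, and exploiting the fact that for Axioms \ref{ax:x}, \ref{ax:xi}, and \ref{ax:xiv} the relevant disjuncts have union $X$, which forces the value of the disjunction despite $\lor$ being only partially determined. Your pointwise treatment of those three case-analysis axioms is a sound (and slightly more contrapositive) rendering of the paper's explicit set computations, and all the individual reductions you sketch (e.g., the empty intersection $\Int(v(\alpha))\cap\ov{v(\alpha)^c}$ for Axiom \ref{ax:bc1}) check out.
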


\begin{proof}
Suppose $\mathcal{M}=\langle\mathcal{T},v\rangle$ be a topological model for $\vd$, where $\mathcal{T}=\langle X,\tau \rangle$ is a topological structure for $\vd$ and $v$ is a valuation. We need to show that $v(\varphi)=X$.
\begin{enumerate}
    \item Suppose $\varphi$ is an instance of Axiom \ref{ax:i}. Then, $\varphi$ is of the form $\alpha\lra(\beta\lra\alpha)$ for some $\alpha,\beta\in\lang$. Now,
    \[
    \begin{array}{ccl}
         v(\varphi)&=&v(\alpha)^c\cup v(\beta\lra\alpha)\\
         &=&v(\alpha)^c\cup (v(\beta)^c\cup v(\alpha))\\
         &=&X\cup v(\beta)^c\\
         &=&X
    \end{array}
    \]

     \item Suppose $\varphi$ is an instance of Axiom \ref{ax:ii}. Then, $\varphi$ is of the form $(\alpha\lra(\beta\lra\gamma))\lra ((\alpha\lra\beta)\lra(\alpha\lra\gamma))$ for some $\alpha,\beta,\gamma\in\lang$. Now, 
     \[
     v(\alpha\lra(\beta\lra\gamma))=v(\alpha)^c\cup v(\beta)^c\cup v(\gamma),
     \]
     and
     \[
     \begin{array}{ccl}
          v((\alpha\lra\beta)\lra(\alpha\lra\gamma))&=&v(\alpha\lra\beta)^c\cup v(\alpha\lra\gamma)\\
          &=&(v(\alpha)^c\cup v(\beta))^c\cup(v(\alpha)^c\cup v(\gamma))\\
          &=&(v(\alpha)\cap v(\beta)^c)\cup(v(\alpha)^c\cup v(\gamma))\\
          &=&(v(\alpha)\cup v(\alpha)^c)\cap(v(\beta)^c\cup v(\alpha)^c)\cup v(\gamma)\\
          &=&X\cap(v(\alpha)^c\cup v(\beta)^c\cup v(\gamma)\\
          &=&v(\alpha)^c\cup v(\beta)^c\cup v(\gamma)
     \end{array}
     \]
    Thus, $v(\alpha\lra(\beta\lra\gamma))=v((\alpha\lra\beta)\lra(\alpha\lra\gamma))$, and hence, by Lemma \ref{lem:impl'subset}, $v(\varphi)=v((\alpha\lra(\beta\lra\gamma))\lra ((\alpha\lra\beta)\lra(\alpha\lra\gamma)))=X$.     

     \item Suppose $\varphi$ is an instance of Axiom \ref{ax:iii}. Then, $\varphi$ is of the form $\alpha\lra(\beta\lra(\alpha\land\beta))$ for some $\alpha,\beta\in\lang$. Now, 
     \[
        \begin{array}{ccl}
             v(\beta\lra(\alpha\land\beta))&=&v(\beta)^c\cup v(\alpha\land\beta)\\
             &=&v(\beta)^c\cup(v(\alpha)\cap v(\beta))\\ 
             &=&(v(\beta)^c\cup v(\alpha))\cap(v(\beta)^c\cup v(\beta))\\
             &=&(v(\alpha)\cup v(\beta)^c)\cap X\\
             &=&v(\alpha)\cup v(\beta)^c
        \end{array}
     \]
     Thus, $v(\alpha)\subseteq v(\beta\lra(\alpha\land\beta))$. Hence, by Lemma \ref{lem:impl'subset}, $v(\varphi)=v(\alpha\lra(\beta\lra(\alpha\land\beta)))=X$.
  
     \item Suppose $\varphi$ is an instance of Axiom \ref{ax:iv}. Then, $\varphi$ is of the form $(\alpha\land\beta)\lra \alpha$ for some $\alpha,\beta\in\lang$. Now,
     \[
     v(\alpha\land\beta)=v(\alpha)\cap v(\beta)\subseteq v(\alpha).
     \]
     So, by Lemma \ref{lem:impl'subset}, $v(\varphi)=v((\alpha\land\beta)\lra \alpha)=X$.
     
     \item Suppose $\varphi$ is an instance of Axiom \ref{ax:v}. So, $\varphi$ is of the form $(\alpha\land\beta)\lra \beta$ for some $\alpha,\beta\in\lang$. Then, by similar arguments as for Axiom \ref{ax:iv} above, $v(\varphi)=X$.
     
     \item Suppose $\varphi$ is an instance of Axiom \ref{ax:vi}. Then, $\varphi$ is of the form $\alpha\lra(\alpha\lor\beta)$ for some $\alpha,\beta\in\lang$. Now, $v(\alpha\lor\beta)$ is such that $v(\alpha)\cup v(\beta)\subseteq v(\alpha\lor\beta)$. So, $v(\alpha)\subseteq v(\alpha\lor\beta)$, and hence, by Lemma \ref{lem:impl'subset}, $v(\varphi)=v(\alpha\lra(\alpha\lor\beta))=X$.
     
     \item Suppose $\varphi$ is an instance of Axiom \ref{ax:vii}. So, $\varphi$ is of the form $\beta\lra(\alpha\lor\beta)$ for some $\alpha,\beta\in\lang$. Then, by similar arguments as for Axiom \ref{ax:vi} above, $v(\varphi)=X$.

     \item Suppose $\varphi$ is an instance of Axiom \ref{ax:viii}. Then, $\varphi$ is of the form $(\alpha\lra\beta)\lor\alpha$ for some $\alpha,\beta\in\lang$. Now, 
     \[
     v(\alpha\lra\beta)\cup v(\alpha)=v(\alpha)^c\cup v(\beta)\cup v(\alpha)=X\cup v(\beta)=X.
     \]
     Since $v(\alpha\lra\beta)\cup v(\alpha)\subseteq v((\alpha\lra\beta)\lor\alpha)$, this implies that $X\subseteq v((\alpha\lra\beta)\lor\alpha)=v(\varphi)$. Hence, $v(\varphi)=X$.
     
     \item Suppose $\varphi$ is an instance of Axiom \ref{ax:ix}. Then, $\varphi$ is of the form $\alpha\lor\neg\alpha$ for some $\alpha\in\lang$. Now, $v(\alpha)\cup v(\neg\alpha)\subseteq v(\alpha\lor\neg\alpha)$, i.e., $v(\alpha)\cup\ov{v(\alpha)^c}\subseteq v(\alpha\lor\neg\alpha)$. Since $v(\alpha)^c\subseteq\ov{v(\alpha)^c}$, we have
     \[
     X=v(\alpha)\cup v(\alpha)^c\subseteq v(\alpha)\cup\ov{v(\alpha)^c}\subseteq v(\alpha\lor\neg\alpha).
     \]
     This implies that $v(\varphi)=v(\alpha\lor\neg\alpha)=X$.
     
     \item Suppose $\varphi$ is an instance of Axiom \ref{ax:x}. Then, $\varphi$ is of the form $(\alpha\lra\gamma)\lra((\neg\alpha\lra\gamma)\lra((\alpha\lor\neg\alpha)\lra\gamma))$ for some $\alpha,\gamma\in\lang$. From the validity of Axiom \ref{ax:ix} above, $v(\alpha\lor\neg\alpha)=X$. So, 
     \[
     v((\alpha\lor\neg\alpha)\lra\gamma))=v(\alpha\lor\neg\alpha)^c\cup v(\gamma)=X^c\cup v(\gamma)=v(\gamma).
     \]
     Now, $v(\neg\alpha\lra\gamma)=v(\neg\alpha)^c\cup v(\gamma)=\left(\ov{v(\alpha)^c}\right)^c\cup v(\gamma)$. Thus,
     \[
     \begin{array}{ccl}
         v((\neg\alpha\lra\gamma)\lra((\alpha\lor\neg\alpha)\lra\gamma))&=&(v(\neg\alpha\lra\gamma)^c\cup v((\alpha\lor\neg\alpha)\lra\gamma))\\
        &=&\left(\left(\ov{v(\alpha)^c}\right)^c\cup v(\gamma)\right)^c\cup v(\gamma)\\
        &=&\left(\ov{v(\alpha)^c}\cap v(\gamma)^c\right)\cup v(\gamma)\\
        &=&\left(\ov{v(\alpha)^c}\cup v(\gamma)\right)\cap X\\
        &=&\ov{v(\alpha)^c}\cup v(\gamma).
     \end{array}
     \]
     Now, as $v(\alpha)^c\subseteq\ov{v(\alpha)^c}$, we have
     \[
     v(\alpha\lra\gamma)=v(\alpha)^c\cup v(\gamma)\subseteq \ov{v(\alpha)^c}\cup v(\gamma)=v((\neg\alpha\lra\gamma)\lra((\alpha\lor\neg\alpha)\lra\gamma)).
     \]
     This implies, by Lemma \ref{lem:impl'subset}, that $v(\varphi)=X$.
     
     \item Suppose $\varphi$ is an instance of Axiom \ref{ax:xi}. Then, $\varphi$ is of the form $((\alpha\lra\beta)\lra\gamma)\lra((\alpha\lra\gamma)\lra(((\alpha\lra\beta)\lor\alpha)\lra\gamma))$ for some $\alpha,\beta,\gamma\in\lang$. Now, from the validity of Axiom \ref{ax:viii}, $v((\alpha\lra\beta)\lor\alpha)=X$. So, 
     \[
     v(((\alpha\lra\beta)\lor\alpha)\lra\gamma)=v((\alpha\lra\beta)\lor\alpha)^c\cup v(\gamma)=X^c\cup v(\gamma)=v(\gamma).
     \]
     Then,
     \[
     \begin{array}{ccl}
          v((\alpha\lra\gamma)\lra(((\alpha\lra\beta)\lor\alpha)\lra\gamma))&=&v(\alpha\lra\gamma)^c\cup v(((\alpha\lra\beta)\lor\alpha)\lra\gamma)\\
          &=&\left(v(\alpha)^c\cup v(\gamma)\right)^c\cup v(\gamma)\\
          &=&\left(v(\alpha)\cap v(\gamma)^c\right)\cup v(\gamma)\\
          &=&\left(v(\alpha)\cup v(\gamma)\right)\cap X\\
          &=&v(\alpha)\cup v(\gamma).
     \end{array}      
     \]
     Now, 
     \[
     v((\alpha\lra\beta)\lra\gamma)=v(\alpha\lra\beta)^c\cup v(\gamma)=\left(v(\alpha)^c\cup v(\beta)\right)^c\cup v(\gamma)=\left(v(\alpha)\cap v(\beta)^c\right)\cup v(\gamma).
     \]
     Since, $v(\alpha)\cap v(\beta)^c\subseteq v(\alpha)$,
     \[
     v((\alpha\lra\beta)\lra\gamma)\subseteq v((\alpha\lra\gamma)\lra(((\alpha\lra\beta)\lor\alpha)\lra\gamma)).
     \]
     This implies, by Lemma \ref{lem:impl'subset}, that $v(\varphi)=X$.

     \item Suppose $\varphi$ is an instance of Axiom \ref{ax:bc1}. So, $\varphi$ is of the form $\circ\alpha\lra(\alpha\lra(\neg\alpha\lra\beta))$ for some $\alpha,\beta\in\lang$. Now, $v(\circ\alpha)=v(\alpha)^c\cup\Int(v(\alpha))$ and
     \[
     \begin{array}{ccl}
          v(\alpha\lra(\neg\alpha\lra\beta))&=&v(\alpha)^c\cup v(\neg\alpha\lra\beta)\\
          &=&v(\alpha)^c\cup\left(v(\neg\alpha)^c\cup v(\beta)\right)\\
          &=&v(\alpha)^c\cup\left(\left(\ov{v(\alpha)^c}\right)^c\cup v(\beta)\right).
     \end{array}
     \]
     Then, by Lemma \ref{lem:compclcomp},
     \[
     v(\alpha\lra(\neg\alpha\lra\beta))=v(\alpha)^c\cup\Int(v(\alpha))\cup v(\beta).
     \]
     Thus, $v(\circ\alpha)\subseteq v(\alpha\lra(\neg\alpha\lra\beta))$, and hence, by Lemma \ref{lem:impl'subset}, $v(\varphi)=X$.

     \item Suppose $\varphi$ is an instance of Axiom \ref{ax:ciw}. So, $\varphi$ is of the form $\circ\alpha\lor(\alpha\land\neg\alpha)$ for some $\alpha\in\lang$. Now,
     \[
     \begin{array}{ccl}
          v(\circ\alpha)\cup v(\alpha\land\neg\alpha)&=&v(\alpha)^c\cup\Int(v(\alpha))\cup(v(\alpha)\cap v(\neg\alpha))\\
          &=&v(\alpha)^c\cup\Int(v(\alpha))\cup\left(v(\alpha)\cap\ov{v(\alpha)^c}\right)\\
          &=&\left(v(\alpha)^c\cup\Int(v(\alpha))\cup v(\alpha)\right)\cap\left(v(\alpha)^c\cup\Int(v(\alpha))\cup\ov{v(\alpha)^c}\right)\\
          &=&X\cap\left(v(\alpha)^c\cup\Int(v(\alpha))\cup\ov{v(\alpha)^c}\right)\\
          &=&v(\alpha)^c\cup\Int(v(\alpha))\cup\ov{v(\alpha)^c}\\
     \end{array}
     \]
     Since $\Int(v(\alpha))=\left(\ov{v(\alpha)^c}\right)^c$, by Lemma \ref{lem:compclcomp}, $v(\circ\alpha)\cup v(\alpha\land\neg\alpha)=X$. Thus,
     \[
     X=v(\circ\alpha)\cup v(\alpha\land\neg\alpha)\subseteq v(\circ\alpha\lor(\alpha\land\neg\alpha))=v(\varphi),
     \]
    which implies that $v(\varphi)=X$.
    
     \item Suppose $\varphi$ is an instance of Axiom \ref{ax:xiv}. So, $\varphi$ is of the form $(\circ\alpha\lra\gamma)\lra(((\alpha\land\neg\alpha)\lra\gamma)\lra((\circ\alpha\lor(\alpha\land\neg\alpha))\lra\gamma))$ for some $\alpha,\gamma\in\lang$. Now, by the validity of Axiom \ref{ax:ciw}, $v(\circ\alpha\lor(\alpha\land\neg\alpha))=X$. So, 
     \[
     v((\circ\alpha\lor(\alpha\land\neg\alpha))\lra\gamma)=v(\circ\alpha\lor(\alpha\land\neg\alpha))^c\cup v(\gamma)=X^c\cup v(\gamma)=v(\gamma).
     \]
     Now, 
     \[
     \begin{array}{ccl}
          v((\alpha\land\neg\alpha)\lra\gamma)&=&v(\alpha\land\neg\alpha)^c\cup v(\gamma)\\
          &=&(v(\alpha)\cap v(\neg\alpha))^c\cup v(\gamma)\\
          &=&\left(v(\alpha)\cap\ov{v(\alpha)^c}\right)^c\cup v(\gamma)\\
          &=&v(\alpha)^c\cup\left(\ov{v(\alpha)^c}\right)^c\cup v(\gamma)\\
     \end{array}
     \]
     Thus,
     \[
     \begin{array}{ccl}
          v(((\alpha\land\neg\alpha)\lra\gamma)\lra((\circ\alpha\lor(\alpha\land\neg\alpha))\lra\gamma))&=&\left(v(\alpha)^c\cup\left(\ov{v(\alpha)^c}\right)^c\cup v(\gamma)\right)^c\cup v(\gamma)\\
          &=&\left(v(\alpha)\cap\ov{v(\alpha)^c}\cap v(\gamma)^c\right)\cup v(\gamma)\\
          &=&\left(\left(v(\alpha)\cap\ov{v(\alpha)^c}\right)\cup v(\gamma)\right)\cap X\\
          &=&\left(v(\alpha)\cap\ov{v(\alpha)^c}\right)\cup v(\gamma).
     \end{array}
     \]
     Finally,
     \[
     \begin{array}{ccl}
          v(\circ\alpha\lra\gamma)&=&v(\circ\alpha)^c\cup v(\gamma)\\
          &=&\left(v(\alpha)^c\cup\Int(v(\alpha))\right)^c\cup v(\gamma)\\
          &=&\left(v(\alpha)\cap\left(\Int(v(\alpha))\right)^c\right)\cup v(\gamma)\\
          &=&\left(v(\alpha)\cap\ov{v(\alpha)^c}\right)\cup v(\gamma),\hbox{ by Lemma \ref{lem:compclcomp}.}
     \end{array}
     \]
     Thus, $v(\circ\alpha\lra\gamma)= v(((\alpha\land\neg\alpha)\lra\gamma)\lra((\circ\alpha\lor(\alpha\land\neg\alpha))\lra\gamma))$, which implies, by Lemma \ref{lem:impl'subset}, that $v(\varphi)=X$.

     \item Suppose $\varphi$ is an instance of Axiom \ref{ax:xv}. So, $\varphi$ is of the form $\sim\neg\alpha\lra\sim\neg\sim\neg\alpha$ for some $\alpha\in\lang$. Now,
     \[
     v(\sim\neg\alpha)=v(\neg\alpha)^c=\left(\ov{v(\alpha)^c}\right)^c=\Int(v(\alpha)),\hbox{ by Lemma \ref{lem:compclcomp}.}
     \]
     Thus, 
     \[
     v(\sim\neg\sim\neg\alpha)=\Int(v(\sim\neg\alpha))=\Int(\Int(v(\alpha)))=\Int(v(\alpha)).
     \]
     Since $v(\sim\neg\alpha)=v(\sim\neg\sim\neg\alpha)$, by Lemma \ref{lem:impl'subset}, $v(\varphi)=X$.
     
     \item Suppose $\varphi$ is an instance of Axiom \ref{ax:xvi}. So, $\varphi$ is of the form  $\sim\neg\sim\neg\alpha\lra\sim\neg\alpha$ for some $\alpha\in\lang$. By the arguments for the validity of Axiom \ref{ax:xv} above, $v(\sim\neg\sim\neg\alpha)=v(\sim\neg\alpha)$. So, by Lemma \ref{lem:impl'subset}, $v(\varphi)=X$.
     
     \item Suppose $\varphi$ is an instance of Axiom \ref{ax:xvii}. So, $\varphi$ is of the form $\sim\neg(\alpha\land\beta)\lra(\sim\neg\alpha\land\sim\neg\beta)$, for some $\alpha,\beta\in\lang$. Now, by the arguments for the validity of Axiom \ref{ax:xv}, 
     \[
     v(\sim\neg(\alpha\land\beta))=\Int(v(\alpha\land\beta))=\Int(v(\alpha)\cap v(\beta))=\Int(v(\alpha))\cap\Int(v(\beta))\footnote{It is easy to show that, for any topological space $\langle X,\tau\rangle$ and $P,Q\subseteq X$, $\Int(P\cap Q)=\Int(P)\cap\Int(Q)$.}.
     \]
     Also, 
     \[
     v(\sim\neg\alpha\land\sim\neg\beta)=v(\sim\neg\alpha)\cap v(\sim\neg\beta)=\Int(v(\alpha))\cap\Int(v(\beta)).
     \]
     Thus, $v(\sim\neg(\alpha\land\beta))=v(\sim\neg\alpha\land\sim\neg\beta)$, which implies, by Lemma \ref{lem:impl'subset}, that $v(\varphi)=X$.
     
     \item Suppose $\varphi$ is an instance of Axiom \ref{ax:xviii}. So, $\varphi$ is of the form $\sim\neg\alpha\land\sim\neg\beta\lra\sim\neg(\alpha\land\beta)$ for some $\alpha,\beta\in\lang$.  By the arguments for the validity of Axiom \ref{ax:xvii} above, $v(\sim\neg(\alpha\land\beta))=v(\sim\neg\alpha\land\sim\neg\beta)$. So, by Lemma \ref{lem:impl'subset}, $v(\varphi)=X$.
\end{enumerate}
Since $\mathcal{M}$ is an arbitrary topological model, this proves that every instance of an axiom of $\vd$ is valid.
\end{proof}

\begin{lem}\label{lem:rule-truthpreservation}
    Suppose $\mathcal{M}=\langle\mathcal{T},v\rangle$ be a topological model for $\vd$, where $\mathcal{T}=\langle X,\tau\rangle$ is a topological structure for $\vd$ and $v:\lang\to X$ is a valuation. For any $\alpha,\beta\in\lang$,
    \begin{enumerate}[label=(\roman*)]
        \item if $v(\alpha)=v(\alpha\lra\beta)=X$, then $v(\beta)=X$, i.e., Rule \ref{rule:mp} (MP) preserves truth.
        \item if $v(\alpha)=X$, then $v(\neg\alpha\lra\sim\alpha)=X$, i.e., Rule \ref{rule:neg} preserves truth.
    \end{enumerate}
\end{lem}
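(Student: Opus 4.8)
The plan is to handle each rule separately, reducing both to the valuation clauses for $\lra$, $\neg$, and $\sim$, together with Lemma \ref{lem:impl'subset}. In both cases the hypotheses collapse the relevant sets to $X$ or $\emptyset$, so the conclusions follow by elementary set manipulation with no topological subtlety beyond the trivial fact that $\ov{\emptyset}=\emptyset$.

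For part (i), I would argue directly from the valuation clause $v(\alpha\lra\beta)=v(\alpha)^c\cup v(\beta)$. Assuming $v(\alpha)=X$, we have $v(\alpha)^c=\emptyset$, and so the hypothesis $v(\alpha\lra\beta)=X$ becomes $X=\emptyset\cup v(\beta)=v(\beta)$, giving $v(\beta)=X$ as required. Alternatively, one can invoke Lemma \ref{lem:impl'subset}: from $v(\alpha\lra\beta)=X$ we get $v(\alpha)\subseteq v(\beta)$, and since $v(\alpha)=X$ we conclude $X\subseteq v(\beta)\subseteq X$, hence $v(\beta)=X$. Either route is a one-line computation.

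For part (ii), the key observation is that when $v(\alpha)=X$, both negations of $\alpha$ are interpreted as the empty set. By Lemma \ref{lem:impl'subset}, it suffices to show $v(\neg\alpha)\subseteq v(\sim\alpha)$. Using the valuation clauses, $v(\neg\alpha)=\ov{v(\alpha)^c}$ and $v(\sim\alpha)=v(\alpha)^c$. Substituting $v(\alpha)=X$, hence $v(\alpha)^c=\emptyset$, we obtain $v(\neg\alpha)=\ov{\emptyset}=\emptyset$ and $v(\sim\alpha)=\emptyset$, so trivially $v(\neg\alpha)=\emptyset\subseteq\emptyset=v(\sim\alpha)$. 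Therefore $v(\neg\alpha\lra\sim\alpha)=X$. (Equivalently, a direct computation gives $v(\neg\alpha\lra\sim\alpha)=\bigl(\ov{v(\alpha)^c}\bigr)^c\cup v(\alpha)^c=\emptyset^c\cup\emptyset=X$.)

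I do not anticipate a genuine obstacle here, as both arguments are routine once the valuation clauses are unpacked. The only conceptual point worth flagging is that the restriction of Rule \ref{rule:neg} to theorems is exactly what licenses the hypothesis $v(\alpha)=X$ in the soundness setting: a theorem is valid, so $v(\alpha)=X$ holds in every topological model, and it is precisely this that forces $v(\neg\alpha)$ and $v(\sim\alpha)$ to coincide (both being $\emptyset$), making the rule truth-preserving. Without the theorem-hood restriction the hypothesis $v(\alpha)=X$ would be unavailable and the conclusion could fail.
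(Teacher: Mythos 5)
Your proof is correct and takes essentially the same approach as the paper: both parts reduce to unpacking the valuation clauses, with part (i) being the identical one-line computation and part (ii) resting on $\ov{\emptyset}=\emptyset$ exactly as in the paper's direct calculation. Your detour through Lemma \ref{lem:impl'subset} in part (ii) is an immaterial variant of that same computation, so there is no substantive difference.
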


\begin{proof}
    \begin{enumerate}[label=(\roman*)]
        \item Suppose $v(\alpha)=v(\alpha\lra\beta)=X$. Then, 
        \[
        X=v(\alpha\lra\beta)=v(\alpha)^c\cup v(\beta)=X^c\cup v(\beta)=v(\beta).
        \]
       
        \item Suppose $v(\alpha)=X$. Then,
        \[
        v(\neg\alpha\lra\sim\alpha)=v(\neg\alpha)^c\cup v(\alpha)^c=\left(\ov{v(\alpha)^c}\right)^c\cup\emptyset=\left(\ov{\emptyset}\right)^c=X.
        \]
    \end{enumerate}
\end{proof}

\begin{thm}[\textsc{Soundness}]\label{thm:soundness}
For any $\Gamma\cup\{\alpha\}\subseteq \mathcal{L}$, if $\Gamma\vdvd\alpha$, then $\Gamma\modvd\alpha$.
\end{thm}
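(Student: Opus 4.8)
The plan is to fix an arbitrary derivation $(\varphi_1,\ldots,\varphi_n)$ witnessing $\Gamma\vdvd\alpha$ (so $\varphi_n=\alpha$) and argue by induction on its length, using Lemmas \ref{lem:ax-sound} and \ref{lem:rule-truthpreservation} as the base facts. The one feature of Definition \ref{def:sem'tic truth} that must be respected is that the second disjunct of $\modvd$ demands a \emph{single} finite nonempty $\Gamma_0\subseteq\Gamma$ that works in \emph{every} model. This is easily arranged: let $\Gamma_0$ be the set of those $\varphi_i$ that enter the derivation as hypotheses via clause (ii) of Definition \ref{def:derivation}. This $\Gamma_0$ is finite and independent of the model, and the proof then splits according to whether it is empty or not.

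First I would dispose of the auxiliary fact that every theorem is valid, i.e.\ $\emptyset\vdvd\varphi$ implies $\modvd\varphi$. This is a self-contained induction on a derivation from $\emptyset$: each step is either an axiom instance, hence valid by Lemma \ref{lem:ax-sound}; or obtained by MP from two earlier, inductively valid formulas, hence valid by Lemma \ref{lem:rule-truthpreservation}(i); or obtained by Rule \ref{rule:neg} from an earlier formula which, being a theorem occurring in the derivation, is valid by the induction hypothesis, so that Lemma \ref{lem:rule-truthpreservation}(ii) applies. In particular, when $\Gamma_0=\emptyset$ the given derivation uses no hypotheses, so $\alpha$ is a theorem and $\modvd\alpha$, yielding the first disjunct of $\Gamma\modvd\alpha$.

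For the main case $\Gamma_0\neq\emptyset$, I would fix an arbitrary topological model $\mathcal{M}=\langle\mathcal{T},v\rangle$, write $D=\bigcap_{\gamma\in\Gamma_0}v(\gamma)$, and show by induction on $i$ that $D\subseteq v(\varphi_i)$ for every $1\le i\le n$; taking $i=n$ and letting $\mathcal{M}$ range over all models then gives the second disjunct with this fixed $\Gamma_0$. The easy cases are: if $\varphi_i$ is an axiom then $v(\varphi_i)=X\supseteq D$ by Lemma \ref{lem:ax-sound}; if $\varphi_i\in\Gamma$ then $\varphi_i\in\Gamma_0$, so $D\subseteq v(\varphi_i)$ by definition of $D$; and if $\varphi_i$ comes by MP from $\varphi_j$ and $\varphi_k=\varphi_j\lra\varphi_i$, then the induction hypothesis gives $D\subseteq v(\varphi_j)$ and $D\subseteq v(\varphi_k)=v(\varphi_j)^c\cup v(\varphi_i)$, whence any $x\in D$ must lie in $v(\varphi_i)$.

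The delicate step, which I expect to be the crux, is Rule \ref{rule:neg}. Here the containment $D\subseteq v(\varphi_j)$ alone is \emph{not} sufficient, since by the valuation clauses together with Lemma \ref{lem:compclcomp} one has $v(\varphi_i)=v(\neg\varphi_j\lra\sim\varphi_j)=\Int(v(\varphi_j))\cup v(\varphi_j)^c$, and a point of $D$ lying on the boundary of $v(\varphi_j)$ need not survive the passage to the interior. What rescues the argument is precisely the side condition of clause (iv): $\varphi_j$ is required to be a \emph{theorem}, so by the auxiliary fact above $v(\varphi_j)=X$ in $\mathcal{M}$, and then Lemma \ref{lem:rule-truthpreservation}(ii) gives $v(\varphi_i)=X\supseteq D$ outright. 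Thus the restriction of Rule \ref{rule:neg} to theorems is exactly what makes soundness go through, since every other step needs only monotone propagation of the subset relation while this one needs genuine validity of its premise. Assembling the two cases completes the proof.
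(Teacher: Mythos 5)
Your proof is correct, but it handles the main case by a genuinely different route than the paper. Both arguments share the same skeleton: extract from the derivation the finite, model-independent set $\Gamma_0$ of hypotheses actually used, and first establish, by induction on derivations using Lemmas \ref{lem:ax-sound} and \ref{lem:rule-truthpreservation}, that every theorem is valid, which settles the case $\Gamma_0=\emptyset$. For $\Gamma_0\neq\emptyset$, however, the paper argues syntactically: it applies the Deduction theorem (Theorem \ref{thm:DedThm}) repeatedly to convert $\Gamma_0\vdvd\alpha$ into the theorem $\vdvd\gamma_1\lra(\gamma_2\lra\cdots(\gamma_k\lra\alpha)\cdots)$, invokes the already-established validity of theorems, and then computes the valuation of the nested implication to obtain $\bigcap_{\gamma\in\Gamma_0}v(\gamma)\subseteq v(\alpha)$ via Lemma \ref{lem:impl'subset}. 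You instead run a second, direct induction on the derivation with the invariant $D\subseteq v(\varphi_i)$, where $D=\bigcap_{\gamma\in\Gamma_0}v(\gamma)$, never touching the Deduction theorem. Your route buys independence from Theorem \ref{thm:DedThm} (whose proof the paper only cites as standard, and which itself needs a check that Rule \ref{rule:neg} does not disturb it), and it isolates exactly where the side condition on Rule \ref{rule:neg} matters: mere propagation of $D\subseteq v(\varphi_j)$ would fail, since $v(\neg\varphi_j\lra\sim\varphi_j)=\Int(v(\varphi_j))\cup v(\varphi_j)^c$ can exclude boundary points of $v(\varphi_j)$, so genuine validity of the premise is required --- a point invisible in the paper's proof, whose Case 2 contains no rule-by-rule analysis at all. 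The paper's route, in exchange, avoids a second induction and confines the semantics to one algebraic computation, reusing Case 1 wholesale.
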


\begin{proof} 
Suppose $\Gamma\vdvd\alpha$. Let $\mathcal{M}=\langle\mathcal{T},v\rangle$ be a topological model for $\vd$, where $\mathcal{T}=\langle X,\tau\rangle$ is a topological structure for $\vd$ and $v$ is a valuation. 

Since $\Gamma\vdvd\alpha$, there exists a finite sequence $(\varphi_1,\ldots,\varphi_n=\alpha)$, such that, for each $1\le i\le n$,
\begin{enumerate}[label=(\roman*)]
    \item $\varphi_{i}$ is an instance of an axiom of $\vd$, or
    \item $\varphi_{i}\in \Gamma$, or
    \item there exist $1\le j,k<i$ such that $\varphi_{i}$ is obtained from $\varphi_{j},\varphi_{k}$ by MP.
    \item there exists  $j<i$ such that $\varphi_{i}$ is the result of an application of the second inference rule on $\varphi_j$.
\end{enumerate}
Let $\Gamma_0=\{\varphi_i\mid\,1\le i\le n\hbox{ and } \varphi_i\in\Gamma\}$. Then, $\Gamma_0$ is a finite subset of $\Gamma$.
         
\textsc{Case 1:} $\Gamma_0=\emptyset$.

Then, $\vdvd\alpha$ and, for each $1\le i\le n$,
\begin{enumerate}[label=(\roman*)]
    \item $\varphi_{i}$ is an instance of an axiom of $\vd$, or
    \item there exist $1\le j,k<i$ such that $\varphi_{i}$ is obtained from $\varphi_{j},\varphi_{k}$ by MP.
    \item there exists  $j<i$ such that $\varphi_{i}$ is the result of an application of the second inference rule on $\varphi_j$.
\end{enumerate}
Thus, by Lemmas \ref{lem:ax-sound} and \ref{lem:rule-truthpreservation} and by a straightforward induction, $v(\varphi_i)=X$ for all $1\le i\le n$. Hence, $v(\varphi_n)=v(\alpha)=X$.

This, in fact, establishes that every theorem of $\vd$ is valid.

\textsc{Case 2:} $\Gamma_0\neq\emptyset$.         

Let $\Gamma_{0}=\{\gamma_{1},\gamma_2,\ldots,\gamma_{k}\}$. Then, $\Gamma_0\vdvd\alpha$, i.e., $\{\gamma_{1},\gamma_{2},\ldots,\gamma_{k}\}\vdvd\alpha$. So, by applying the Deduction theorem (Theorem \ref{thm:DedThm}) repeatedly, we get 
\[
\vdvd\gamma_1\lra(\gamma_{2}\lra\cdots(\gamma_{k}\lra\alpha)\cdots).
\]
Thus, by Case 1, $\modvd\gamma_1\lra(\gamma_{2}\lra\cdots(\gamma_{k}\lra\alpha)\cdots)$. This implies that $v(\gamma_{1}\lra(\gamma_{2}\lra\cdots(\gamma_{k}\lra\alpha)\cdots))=X$. Now,
\[
v(\gamma_{1}\lra(\gamma_{2}\lra\cdots(\gamma_{k}\lra\alpha)\cdots))=v(\gamma_{1})^{c}\cup v(\gamma_{2}\lra\cdots(\gamma_{k}\lra\alpha)\cdots)).
\]
Then, proceeding similarly, we get
\[
\begin{array}{ccl}
     v(\gamma_{1}\lra(\gamma_{2}\lra\cdots(\gamma_{k}\lra\alpha)\cdots))&=&\left(v(\gamma_{1})^{c}\cup v(\gamma_{2})^{c}\cup\dots v(\gamma_{k})^{c}\right)\cup v(\alpha)\\
     &=&\left(v(\gamma_1)\cap v(\gamma_{2})\cap\dots\cap v(\gamma_{k})\right)^{c}\cup v(\alpha)\\
     &=&v((\gamma_1\land\gamma_2\land\cdots\land\gamma_k)\lra\alpha)\\
\end{array}
\] 
Thus, $v((\gamma_1\land\gamma_2\land\cdots\land\gamma_k)\lra\alpha)=X$, which implies, by Lemma \ref{lem:impl'subset}, that $v(\gamma_1\land\gamma_2\land\cdots\gamma_k)\subseteq v(\alpha)$, i.e., $\displaystyle\bigcap_{\gamma\in\Gamma_0}v(\gamma)\subseteq v(\alpha)$. 

Since $\mathcal{M}$ is an arbitrary topological model, this implies, by Definition \ref{def:sem'tic truth}, that $\Gamma\modvd\alpha$.
\end{proof}  

\section{\texorpdfstring{$\vd$}{\textbf{vD}}: a non-self-extensional LFI}\label{sec:lfi}

The \emph{Logics of Formal inconsistency} or \emph{LFIs} are paraconsistent logics, i.e., logics with a non-explosive negation. Along with that, these logics use a unary \emph{consistency} operator, usually denoted by $\circ$. The following is a simplified definition of an LFI. A more general definition can be found in \cite{Carnielli2007,CarnielliConiglio2016}.

\begin{dfn}\label{def:lfi}
Let $\mathscr{L}=\langle\lang,\vdash\rangle$ be a standard logic with a signature containing a negation $\neg$ and a primitive or defined unary consistency operator $\circ$. Then, $\mathscr{L}$ is said to be a \emph{Logic of Formal Inconsistency (LFI)} with respect to $\neg$ and $\circ$ if the following conditions hold.

\begin{enumerate}[label=(\roman*)]
    \item $\{\varphi,\neg\varphi\}\not\vdash\psi$ for some $\varphi,\psi\in\lang$.
    \item There exist $\varphi,\psi\in\lang$ such that \begin{enumerate}
        \item $\{\circ\varphi,\varphi\}\not\vdash\psi$;
        \item $\{\circ\varphi,\neg\varphi\}\not\vdash\psi$.
    \end{enumerate}
    \item $\{\circ\varphi,\varphi,\neg\varphi\}\vdash\psi$ for all $\varphi,\psi\in\lang$.
\end{enumerate}
\end{dfn}

\begin{thm}\label{thm:vd_lfi}
    Suppose $\vd=\langle\lang,\vdvd\rangle$ as before, and $V$ be the set of variables. Let $p,q\in V$ be two distinct variables. Then, the following statements hold.
    \begin{enumerate}[label=(\roman*)]
        \item $\{p,\neg p\}\not\vdvd q$;
        \item $\{\circ p,p\}\not\vdvd q$;
        \item $\{\circ p,\neg p\}\not\vdvd q$.
    \end{enumerate}
\end{thm}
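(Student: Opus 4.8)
The plan is to prove all three non-derivabilities by contraposing the Soundness theorem (Theorem \ref{thm:soundness}): since $\Gamma\vdvd q$ implies $\Gamma\modvd q$, it suffices, in each case, to exhibit a situation witnessing $\Gamma\not\modvd q$. Unwinding Definition \ref{def:sem'tic truth}, $\Gamma\not\modvd q$ means that (a) $q$ is not valid, i.e. some model has $v(q)\neq X$, and (b) for every finite nonempty $\Gamma_0\subseteq\Gamma$ there is a topological model in which $\bigcap_{\gamma\in\Gamma_0}v(\gamma)\not\subseteq v(q)$. The cleanest route is to produce a single model that simultaneously refutes all the required containments and has $v(q)\neq X$; then the same model witnesses both (a) and (b) for every $\Gamma_0$ at once.

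First I would fix a topological structure and a valuation sending $p$ to a set that is proper, non-open, and has nonempty interior, while sending $q$ to $\emptyset$. Concretely, I would take $X=\{0,1,2\}$ with $\tau=\{\emptyset,\{0\},X\}$, set $v(p)=\{0,1\}$ and $v(q)=\emptyset$ (equivalently one could use $\mathbb{R}$ with the standard topology and $v(p)=[0,1]$). Since the formula algebra is absolutely free over $V$, this assignment on variables extends to a valuation: the clauses of Definition \ref{def:val'on} determine $v$ on all connectives except $\lor$, for which any choice respecting $v(\alpha)\cup v(\beta)\subseteq v(\alpha\lor\beta)$ (e.g. taking equality) is admissible, so such a $v$ exists.

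Next I would compute the relevant interpretations using the valuation clauses together with Lemma \ref{lem:compclcomp}, which gives $v(\neg p)=\ov{v(p)^c}=X\setminus\Int(v(p))$. With the data above this yields $v(\neg p)=\{1,2\}$ and $v(\circ p)=v(p)^c\cup\Int(v(p))=\{0,2\}$. The key observations, valid in any model, are the identities
\[
v(p)\cap v(\neg p)=v(p)\setminus\Int(v(p)),\qquad v(\circ p)\cap v(p)=\Int(v(p)),\qquad v(\circ p)\cap v(\neg p)=v(p)^c,
\]
so in the chosen model these intersections equal $\{1\}$, $\{0\}$, and $\{2\}$ respectively, all nonempty. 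Consequently, for each of the three premise sets and each nonempty subset $\Gamma_0$ (the smallest intersection always coming from the full premise set), $\bigcap_{\gamma\in\Gamma_0}v(\gamma)$ is nonempty, hence not contained in $v(q)=\emptyset$; and $v(q)=\emptyset\neq X$ shows $q$ is not valid. By the contrapositive of Soundness, all three non-derivabilities follow.

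The main obstacle is not any single computation but arranging one valuation to defeat every containment simultaneously: the three intersections above are all nonempty only when $v(p)$ is proper, non-open, and has nonempty interior, which is impossible on a one- or two-point space and is exactly why a three-point (or real-line) structure is required. Care must also be taken with the quantifier structure of Definition \ref{def:sem'tic truth}—\emph{every} nonempty $\Gamma_0$ must be refuted, not merely the full premise set—though taking $v(q)=\emptyset$ reduces each requirement to nonemptiness of the corresponding intersection, and the singleton subsets are then immediate since $v(p)$, $v(\neg p)$, and $v(\circ p)$ are each nonempty in the model.
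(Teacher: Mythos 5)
Your proof is correct, but it takes a genuinely different technical route from the paper's. The paper first converts each claim into a statement about theoremhood via the Deduction Theorem (Theorem \ref{thm:DedThm}): $\{p,\neg p\}\vdvd q$ iff $\vdvd p\lra(\neg p\lra q)$, and similarly for the other two cases; it then refutes the validity of these three implicational formulas in a single model on $\mathbb{R}$ (usual topology, $v(p)=[0,1)$, $v(q)=(2,3)$) and applies Soundness only in the form ``non-valid formulas are not theorems.'' You instead apply the contrapositive of the full Soundness theorem (Theorem \ref{thm:soundness}) directly to the consequence relation, which forces you to unwind the quantifier structure of Definition \ref{def:sem'tic truth} --- every finite nonempty $\Gamma_0$ must be refuted in some model, and $q$ itself must be non-valid --- a subtlety the paper's route through the Deduction Theorem never has to confront; you handle it correctly. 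Your identities $v(p)\cap v(\neg p)=v(p)\setminus\Int(v(p))$, $v(\circ p)\cap v(p)=\Int(v(p))$, and $v(\circ p)\cap v(\neg p)=v(p)^c$ isolate exactly what a countermodel needs (a proper, non-open set with nonempty interior), and both your three-point space and the paper's $[0,1)\subseteq\mathbb{R}$ meet this criterion; your choice $v(q)=\emptyset$ additionally collapses every containment-refutation to a nonemptiness check, which is what lets one model dispose of all subsets $\Gamma_0$ and all three premise sets simultaneously. What the paper's approach buys is brevity --- three short computations with no case analysis over subsets of premises; what yours buys is independence from the Deduction Theorem, a minimal finite countermodel, and a general criterion identifying precisely which valuations witness the failures.
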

    
\begin{proof}  
    Let $\mathcal{M}=\langle\langle\mathbb{R},\tau\rangle,v\rangle$, be a topological model for $\vd$, where $\langle\mathbb{R},\tau\rangle$ is the topological space on $\mathbb{R}$, the set of real numbers and $\tau$ is the usual topology on $\mathbb{R}$. Moreover, let $v$ be a valuation for $\vd$ such that $v(p)=[0,1)$ and $v(q)=(2,3)$.
    \begin{enumerate}[label=(\roman*)]
        \item We note that $v(p)^{c}=(-\infty,0)\cup[1,\infty)$, which implies that $v(\neg p)=\ov{v(p)^c}=(-\infty,0]\cup[1,\infty)$. So,
        \[
        \begin{array}{rll}
             v(p\lra(\neg p\lra q))&=&v(p)^c\cup v(\neg p)^c\cup v(q)\\
             &=&(-\infty,0)\cup[1,\infty)\cup(0,1)\cup(2,3)\\
             &=&(-\infty,0)\cup(0,\infty)\\
             &\neq&\mathbb{R}
        \end{array}
        \]
       Thus, $\not\modvd p\lra(\neg p\lra q)$, and hence, by the Soundness theorem (Theorem \ref{thm:soundness}), $\not\vdvd  p\lra(\neg p\lra q)$. Hence, by the Deduction theorem (Theorem \ref{thm:DedThm}), $\{p,\neg p\}\not\vdvd q$.
       
        \item We note that $v(\circ p)=v(p)^c\cup \mathrm{Int}(p)=\mathbb{R}\mbox{\textbackslash}\{0\}$. Then,
        \[
        \begin{array}{rcl}
             v(\circ p\lra(p\lra q))&=&v(\circ p)^c\cup v(p)^c\cup v(q)\\
             &=&\{0\}\cup(-\infty,0)\cup[1,\infty)\cup(2,3)\\
             &=&(-\infty,0]\cup[1,\infty)\\
             &\neq&\mathbb{R}
        \end{array}
        \]
        Thus, $\not\modvd\circ p\lra(p\lra q)$, and hence, by the Soundness theorem (Theorem \ref{thm:soundness}), $\not\vdvd  \circ p\lra(p\lra q)$. Hence, by the Deduction theorem (Theorem \ref{thm:DedThm}), $\{\circ p,p\}\not\vdvd q$.
        
        \item We note that
        \[
        \begin{array}{rcl}
             v(\circ p\lra(\neg p\lra q))&=&v(\circ p)^c\cup v(\neg p)^c\cup v(q)\\
             &=&\{0\}\cup(0,1)\cup(2,3)\\
             &=&[0,1)\cup(2,3)\\
             &\neq&\mathbb{R}
        \end{array}
        \]
        Thus, $\not\modvd\circ p\lra(\neg p\lra q)$, and hence, by the Soundness theorem (Theorem \ref{thm:soundness}), $\not\vdvd  \circ p\lra(\neg p\lra q)$. Hence, by the Deduction theorem (Theorem \ref{thm:DedThm}), $\{\circ p,\neg p\}\not\vdvd q$.
    \end{enumerate}
\end{proof}

\begin{cor}
    $\vd$ is an LFI.
\end{cor}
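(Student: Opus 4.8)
The plan is to verify the three defining conditions of Definition \ref{def:lfi} for $\vd$ with respect to the negation $\neg$ and the consistency operator $\circ$. First I would observe that $\vd$ already qualifies as a \emph{standard} logic by Remark \ref{rem:vd-Tarskian/finitary}, which records that $\vd$ is Tarskian, finitary, and structural; moreover, its signature $\Sigma=\{\land,\lor,\lra,\neg,\circ\}$ visibly contains the required negation $\neg$ and the (primitive) unary consistency operator $\circ$. Thus the structural prerequisites of the definition are met, and it only remains to check conditions (i)--(iii).

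The key point is that Theorem \ref{thm:vd_lfi} does almost all of the work. Taking $\varphi:=p$ and $\psi:=q$ for two distinct variables $p,q\in V$, part (i) of that theorem gives $\{p,\neg p\}\not\vdvd q$, which is exactly condition (i) of Definition \ref{def:lfi}; parts (ii) and (iii) of the theorem give $\{\circ p,p\}\not\vdvd q$ and $\{\circ p,\neg p\}\not\vdvd q$, which are precisely the two clauses (ii)(a) and (ii)(b) of the LFI definition. So the only condition not yet established is (iii), the global validity of controlled explosion.

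For condition (iii) I would produce a direct Hilbert-style derivation from Axiom \ref{ax:bc1} (the gentle explosion law). For arbitrary $\varphi,\psi\in\lang$, the sequence
\[
\begin{array}{rll}
1.&\circ\varphi&[\hbox{Hypothesis}]\\
2.&\varphi&[\hbox{Hypothesis}]\\
3.&\neg\varphi&[\hbox{Hypothesis}]\\
4.&\circ\varphi\lra(\varphi\lra(\neg\varphi\lra\psi))&[\hbox{Axiom \ref{ax:bc1}}]\\
5.&\varphi\lra(\neg\varphi\lra\psi)&[\hbox{MP on (1),(4)}]\\
6.&\neg\varphi\lra\psi&[\hbox{MP on (2),(5)}]\\
7.&\psi&[\hbox{MP on (3),(6)}]
\end{array}
\]
witnesses $\{\circ\varphi,\varphi,\neg\varphi\}\vdvd\psi$ for all $\varphi,\psi$, giving condition (iii). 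Combining this with the three non-entailments harvested from Theorem \ref{thm:vd_lfi} completes the verification, and the corollary follows.

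I do not anticipate a genuine obstacle here: the substantive non-triviality lives entirely in the three independence results of Theorem \ref{thm:vd_lfi}, whose proofs already rely on the soundness theorem and a single concrete topological model on $\mathbb{R}$. The only mild point of care is bookkeeping—confirming that the witnesses for condition (ii) of the definition may be chosen uniformly (indeed $\varphi=p$, $\psi=q$ serve for both sub-clauses simultaneously, as the theorem is stated for the same fixed $p,q$)—but this is immediate from the statement of Theorem \ref{thm:vd_lfi}.
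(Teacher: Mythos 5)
Your proof is correct and follows essentially the same route as the paper: conditions (i) and (ii) of Definition \ref{def:lfi} are harvested from Theorem \ref{thm:vd_lfi}, and condition (iii) is obtained from Axiom \ref{ax:bc1}. The only cosmetic difference is that the paper passes through the Deduction theorem to get $\{\circ\varphi,\varphi,\neg\varphi\}\vdvd\psi$, whereas you write out the derivation directly with three applications of MP, which is equally valid.
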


\begin{proof}
    It follows from Theorem \ref{thm:vd_lfi}, that the conditions (i) and (ii) of Definition \ref{def:lfi} are satisfied by $\vd$. Now, by Axiom \ref{ax:bc1}, $\vdvd\circ \varphi\lra(\varphi\lra(\neg\varphi\lra \psi))$ for any $\varphi\in\lang$. So, by the Deduction theorem (Theorem \ref{thm:DedThm}),  $\{\varphi,\neg\varphi,\circ\varphi\}\vdvd\psi$ for any $\varphi,\psi\in\lang$. Thus, the condition (iii) of Definition \ref{def:lfi} is also satisfied by $\vd$. Hence, $\vd$ is an LFI.
\end{proof}

We now introduce an abbreviation as follows. Suppose $\logl=\langle\lang,\vdash\rangle$. The relation $\equiv$ is defined on $\lang$ as follows. For any $\alpha,\beta\in\lang$,
\[
\alpha\equiv\beta \qquad\hbox{ iff }\qquad \{\alpha\}\vdash\beta\;\hbox{ and }\;\{\beta\}\vdash\alpha.
\]

\begin{dfn}\label{def:rep'ment} Suppose $\logl=\langle\lang,\vdash\rangle$ is a logic with $V$ as the set of variables over which $\lang$ is generated. Then,the following property is referred to as \emph{replacement}\footnote{This property is referred to as replacement in \cite{Carnielli2007} and weak replacement in \cite{CarnielliConiglio2016, ConiglioPrieto-Sanabria2017}.}. For $\alpha_i,\beta_i\in\lang$, $i=1,\ldots,n$, if $\alpha_i\equiv\beta_i$, then for any $\varphi(p_1,\cdots,p_n)\in\lang$ involving variables $p_1,\ldots,p_n\in V$,
\[
\varphi(p_{1}/\alpha_{1},\dots, p_{n}/\alpha_{n})\equiv\varphi(p_{1}/\beta_{1},\dots,p_{n}/\beta_{n}),
\]
where $\varphi(p_{1}/\psi_{1},\dots, p_{n}/\psi_{n})$ denotes the formula obtained by replacing each occurrence of $p_i$  by $\psi_i$ in $\varphi$.

The logic $\logl$ is said to be \emph{self-extensional} (according to the terminology introduced in \cite{Wojcicki1988}), if the above replacement property holds in it, and \emph{non-self-extensional} otherwise.
\end{dfn}

\begin{rem}
    $\vd=\langle\lang,\vdvd\rangle$ is non-self-extensional, i.e., the replacement property fails in $\vd$. To see this, let $\alpha_1,\alpha_2,\beta_1,\beta_2\in\lang$ such that $\alpha_1\equiv\alpha_2$ and $\beta_1\equiv\beta_2$. Let $\mathcal{M}=\langle\langle X,\tau\rangle,v\rangle$, where $\langle X,\tau\rangle$ is a topological structure for $\vd$ and $v$ is a valuation, be a topological model for $\vd$ such that $v(\alpha_1),v(\alpha_2),v(\beta_1),v(\beta_2)\neq X$. Then, by the Soundness theorem (Theorem \ref{thm:soundness}), $v(\alpha_1)=v(\alpha_2)$ and $v(\beta_1)=v(\beta_2)$. Now, $v(\alpha_1\lor\beta_1)$ and $v(\alpha_2\lor\beta_2)$ are such that $v(\alpha_1\lor\beta_2)\supseteq v(\alpha_1)\cup v(\beta_1)$ and $v(\alpha_2\lor\beta_2)\supseteq v(\alpha_2)\cup v(\beta_2)$. Although, $v(\alpha_1)\cup v(\beta_1)=v(\alpha_2)\cup v(\beta_2)$, this does not necessarily imply that $v(\alpha_1\lor\beta_1)=v(\alpha_2\lor\beta_2)$. Hence, $\vd$ is non-self-extensional.
\end{rem}

\section{Completeness}\label{sec:com'ness}
In this section, we prove that the logic $\vd$ is complete with respect to the topological semantics described earlier. Before we dive into the theorem itself, we mention a few definitions and results that will be useful for the proof of the Completeness theorem.

\subsection{Kuratowski operators}
\begin{dfn}\label{def:Kuratowski}
    Suppose $X$ is a set. Then, an operator $\ov{(\cdot)}:\pow(X)\to\pow(X)$ is called a \emph{Kuratowski closure operator over $X$} if the following conditions are satisfied.
    \begin{enumerate}[label=(\arabic*)]
        \item $\ov{\emptyset}=\emptyset$;
        \item $A\subseteq \ov{A}$ for all $A\subseteq X$;
        \item $\ov{A\cup B}=\ov{A}\cup\ov{B}$ for all $A,B\subseteq X$;
        \item $\ov{\ov{A}}=\ov{A}$, for all $A\subseteq X$.
    \end{enumerate}
\end{dfn}

It is well-known that a Kuratowski closure operator over $X$ defines a unique topology on $X$. One can also define a Kuratowski-like operator on certain collections of subsets of $X$ and extend this operator to a Kuratowski closure operator over $X$. This is described below.

\begin{dfn}\label{def:kuratowski-like}
    Suppose $X$ is a set and $\mathcal{B}\subseteq\pow(X)$ such that
\begin{enumerate}
    \item $\emptyset,X\in \mathcal{B}$, and
    \item for any $F,G\in \mathcal{B}$, $F\cup G\in \mathcal{B}$.    
\end{enumerate}
A \emph{Kuratowski-like operator on $\mathcal{B}$} is a map $\wh{(\cdot)}:\mathcal{B}\lra\mathcal{B}$ such that the following conditions are satisfied.
\begin{enumerate}
    \item $\wh{\emptyset}=\emptyset$;
    \item $F\subseteq\wh{F}$ for every $F\in \mathcal{B}$;
    \item $\wh{F\cup G}=\wh{F}\cup\wh{G}$ for every $F,G\in \mathcal{B}$;
    \item $\wh{\wh{F}}=\wh{F}$ for every $F\in \mathcal{B}$.
\end{enumerate}
\end{dfn}

The following theorem says that a Kuratowski-like operator on a set $\mathcal{B}\subseteq\pow(X)$ can be extended to a Kuratowski closure operator over $X$. A proof of the theorem can be found in \cite{ConiglioPrieto-Sanabria2017}.

\begin{thm}\label{thrm:kura'ki result}
    Suppose $X$ is a set and $\mathcal{B}\subseteq\pow(X)$ is as described above. Let $\wh{(\cdot)}:\mathcal{B}\to\mathcal{B}$ be a Kuratowski-like operator on $\mathcal{B}$. Then, the map $\ov{(\cdot)}:\pow(X)\lra\pow(X)$, defined by $\overline{A}=\bigcap\{\wh{F}\mid\,F\in\mathcal{B}\mbox{ and }A\subseteq\wh{F}\}$, for any $A\subseteq X$, is a Kuratowski closure operator over $X$. Moreover, $\overline{F}=\wh{F}$ for all $F\in \mathcal{B}$.    
\end{thm}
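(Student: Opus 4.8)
The plan is to verify the four Kuratowski closure axioms for the operator $\ov{(\cdot)}$ defined by $\ov{A}=\bigcap\{\wh{F}\mid F\in\mathcal{B}\text{ and }A\subseteq\wh{F}\}$, and then separately establish the ``moreover'' claim that $\ov{F}=\wh{F}$ for every $F\in\mathcal{B}$. Since the statement says a proof can be found in \cite{ConiglioPrieto-Sanabria2017}, I would reconstruct the standard argument. A useful preliminary observation is that because $\wh{(\cdot)}$ is idempotent (condition (4)), the sets of the form $\wh{F}$ are exactly the ``$\wh{}$-fixed'' elements of $\mathcal{B}$; and since $F\subseteq\wh F$, the condition $A\subseteq\wh F$ is what governs membership in the intersection. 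I would first record two monotonicity facts: (a) the Kuratowski-like operator is monotone on $\mathcal{B}$, i.e.\ $F\subseteq G$ implies $\wh F\subseteq\wh G$ (this follows from condition (3), since $G=F\cup G$ gives $\wh G=\wh F\cup\wh G\supseteq\wh F$); and (b) consequently $\ov{(\cdot)}$ is monotone on $\pow(X)$, since enlarging $A$ shrinks the family of $\wh F$ containing it, hence enlarges the intersection.

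The four axioms would then be checked in turn. For $\ov{\emptyset}=\emptyset$: since $\wh{\emptyset}=\emptyset\in\mathcal{B}$ and $\emptyset\subseteq\wh{\emptyset}$, the empty set is one of the sets intersected, forcing $\ov{\emptyset}\subseteq\emptyset$. For $A\subseteq\ov A$: every $\wh F$ in the defining family satisfies $A\subseteq\wh F$ by hypothesis, so $A$ is contained in their intersection. For idempotence $\ov{\ov A}=\ov A$: one inclusion is the previous axiom; for the reverse I would show $\ov A$ itself behaves like one of the generating sets, using that each $\wh F\supseteq A$ is $\wh{}$-fixed so that $\ov A$ is an intersection of fixed sets, and argue any $\wh G$ containing $\ov A$ already contains $A$. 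The subadditivity-to-additivity axiom $\ov{A\cup B}=\ov A\cup\ov B$ is where the real work lies. The inclusion $\ov A\cup\ov B\subseteq\ov{A\cup B}$ is immediate from monotonicity (b). The nontrivial direction is $\ov{A\cup B}\subseteq\ov A\cup\ov B$.

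The hard part will be this last inclusion, and the key idea is to produce, from witnesses for $\ov A$ and $\ov B$, a single witness for $\ov{A\cup B}$. Concretely, I would take arbitrary $\wh F\supseteq A$ and $\wh G\supseteq B$ with $F,G\in\mathcal{B}$; then $F\cup G\in\mathcal{B}$ by the closure assumption on $\mathcal{B}$, and $\wh{F\cup G}=\wh F\cup\wh G$ by condition (3), which contains $A\cup B$. Thus $\wh F\cup\wh G$ is one of the sets in the defining family for $\ov{A\cup B}$, giving $\ov{A\cup B}\subseteq\wh F\cup\wh G$. Intersecting over all such $F$ and $G$ and using a distributivity argument for intersections over unions yields $\ov{A\cup B}\subseteq\ov A\cup\ov B$; this distributivity step is the delicate point and must be handled carefully, likely by a pointwise argument on an element $x\notin\ov A\cup\ov B$ to exhibit a single $\wh F\cup\wh G$ in the family that omits $x$. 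Finally, for the ``moreover'' part, given $F\in\mathcal{B}$, the set $\wh F$ itself appears in the defining family for $\ov F$ (since $F\subseteq\wh F$ by condition (2) and $\wh F=\wh{\wh F}$ by idempotence), so $\ov F\subseteq\wh F$; the reverse inclusion $\wh F\subseteq\ov F$ follows because every $\wh G$ with $F\subseteq\wh G$ satisfies $\wh F=\ov{\wh F}\subseteq\wh G$ by monotonicity and idempotence, whence $\wh F$ lies in every member of the family.
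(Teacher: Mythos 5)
The paper itself contains no proof of this theorem to compare against: it is stated without proof, with a pointer to \cite{ConiglioPrieto-Sanabria2017}. Judged on its own terms, your reconstruction is correct and is the standard argument. The monotonicity facts (a) and (b) are right; $\ov{\emptyset}=\emptyset$ and $A\subseteq\ov{A}$ are immediate as you say; and the hard direction $\ov{A\cup B}\subseteq\ov{A}\cup\ov{B}$ is handled exactly by your pointwise argument: if $x\notin\ov{A}\cup\ov{B}$, pick $F,G\in\mathcal{B}$ with $A\subseteq\wh{F}$, $B\subseteq\wh{G}$, $x\notin\wh{F}$, $x\notin\wh{G}$; then $\wh{F\cup G}=\wh{F}\cup\wh{G}$ lies in the family defining $\ov{A\cup B}$ and omits $x$. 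The distributivity identity you worry about, $\bigcap_{i,j}(P_i\cup Q_j)=(\bigcap_i P_i)\cup(\bigcap_j Q_j)$, does hold for arbitrary families by precisely this element-chasing, so there is no hidden obstruction. (You may also want to record that the defining family is never empty: since $\wh{X}\in\mathcal{B}\subseteq\pow(X)$ and $X\subseteq\wh{X}$, we get $\wh{X}=X$, so $\wh{X}$ witnesses every $A$.) Two slips of phrasing should be repaired when writing this out. First, in the idempotence step, your closing clause (``any $\wh{G}$ containing $\ov{A}$ already contains $A$'') only re-derives the easy inclusion $\ov{A}\subseteq\ov{\ov{A}}$; the inclusion you actually need, $\ov{\ov{A}}\subseteq\ov{A}$, comes from the opposite containment of families: every $\wh{F}$ with $A\subseteq\wh{F}$ satisfies $\ov{A}\subseteq\wh{F}$ and $\wh{F}=\wh{\wh{F}}$, so $\wh{F}$ is itself a member of the family defining $\ov{\ov{A}}$, whence $\ov{\ov{A}}\subseteq\wh{F}$ for each such $F$. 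Second, in the ``moreover'' part, the correct chain is: $F\subseteq\wh{G}$ implies $\wh{F}\subseteq\wh{\wh{G}}=\wh{G}$ by monotonicity of $\wh{(\cdot)}$ and idempotence; writing $\wh{F}=\ov{\wh{F}}\subseteq\wh{G}$ invokes the operator $\ov{(\cdot)}$, whose behaviour on members of $\mathcal{B}$ is exactly what is being established at that point. Neither slip is a gap in the ideas, only in their articulation.
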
 

\subsection{\texorpdfstring{$\alpha$}{a}-saturated sets}

\begin{dfn}\label{def:alp sa'ted}
    Suppose $\mathscr{L}=\langle\lang,\vdash\rangle$ is a logic and $\Delta\cup\{\alpha\}\subseteq\lang$. $\Delta$ is said to be  \emph{$\alpha$-saturated} in $\mathscr{L}$ if the following conditions hold.
    \begin{enumerate}[label=(\alph*)]
        \item $\Delta\not\vdash\alpha$;
        \item $\Delta\cup\{\beta\}\vdash\alpha$ for any $\beta\notin\Delta$.
    \end{enumerate}
\end{dfn}

\begin{dfn}
    Suppose $\mathscr{L}=\langle\lang,\vdash\rangle$ is a logic. A set $\Gamma\subseteq\lang$ is said to be \emph{closed} if, for any $\alpha\in\lang$, $\Gamma\vdash\alpha$ iff $\alpha\in\Gamma$.
\end{dfn}

\begin{lem}\label{lem:maxrel->closed}
    Suppose $\mathscr{L}=\langle\mathcal{L},\vdash\rangle$ is a Tarskian logic and $\Gamma\subseteq \mathcal{L}$. If $\Gamma$ is an  $\alpha$-saturated set, for some $\alpha\in\lang$, then $\Gamma$ is closed.
\end{lem}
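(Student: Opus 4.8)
The plan is to prove the contrapositive-free direction directly: assume $\Gamma$ is $\alpha$-saturated and show that for any $\beta\in\lang$, $\Gamma\vdash\beta$ iff $\beta\in\Gamma$. One direction is immediate from Reflexivity, so the real content is the forward implication.

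First I would recall the definition of closedness: I must show that $\Gamma\vdash\beta$ implies $\beta\in\Gamma$. So suppose $\Gamma\vdash\beta$ and, aiming for a contradiction, suppose $\beta\notin\Gamma$. Then by the second clause of $\alpha$-saturation (Definition \ref{def:alp sa'ted}(b)), since $\beta\notin\Gamma$, we have $\Gamma\cup\{\beta\}\vdash\alpha$. The key step is to combine this with the hypothesis $\Gamma\vdash\beta$ using the Tarskian structure of the logic. Specifically, I would invoke Transitivity (Cut): I know $\Gamma\vdash\gamma$ for every $\gamma\in\Gamma$ by Reflexivity, and $\Gamma\vdash\beta$ by assumption, so $\Gamma$ derives every element of the set $\Gamma\cup\{\beta\}$. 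Since $\Gamma\cup\{\beta\}\vdash\alpha$, Transitivity then yields $\Gamma\vdash\alpha$.

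But $\Gamma\vdash\alpha$ directly contradicts the first clause of $\alpha$-saturation (Definition \ref{def:alp sa'ted}(a)), namely $\Gamma\not\vdash\alpha$. This contradiction forces $\beta\in\Gamma$, completing the forward direction. The converse, that $\beta\in\Gamma$ implies $\Gamma\vdash\beta$, is exactly Reflexivity. Hence $\Gamma$ is closed.

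The main (and only real) obstacle is applying Transitivity with the correct ``intermediate'' set. The cut rule as stated requires $\Delta\vdash\alpha$ together with $\Gamma\vdash\beta$ for all $\beta\in\Delta$; I must take $\Delta=\Gamma\cup\{\beta\}$ and verify that $\Gamma$ proves each member of $\Delta$ — which holds by Reflexivity for members of $\Gamma$ and by the standing hypothesis for $\beta$ itself. Everything else is a routine unwinding of definitions, and no properties beyond those guaranteed by the logic being Tarskian are needed, so the hypothesis that $\mathscr{L}$ is Tarskian is exactly what makes the argument go through.
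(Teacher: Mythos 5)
Your proof is correct and is essentially identical to the paper's: both reduce closedness to the forward implication via Reflexivity, then derive $\Gamma\cup\{\beta\}\vdash\alpha$ from saturation clause (b) and apply Cut with intermediate set $\Gamma\cup\{\beta\}$ to obtain $\Gamma\vdash\alpha$, contradicting clause (a). No gaps; the application of Transitivity is exactly as in the paper.
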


\begin{proof}
    Suppose $\Gamma$ is $\alpha$-saturated, where $\alpha\in\lang$, but not closed. Now, as $\lang$ is Tarskian, and hence, satisfies reflexivity, $\Gamma\vdash\psi$ for all $\psi\in\Gamma$. Then, as $\Gamma$ is not closed, there exists $\psi\in\mathcal{L}$ such that $\Gamma\vdash\psi$ but $\psi\notin\Gamma$. Since $\Gamma$ is $\alpha$-saturated, this means $\Gamma\not\vdash\alpha$ but $\Gamma\cup\{\psi\}\vdash\alpha$. By reflexivity again, and the fact that $\Gamma\vdash\psi$, we have $\Gamma\vdash\theta$ for all $\theta\in\Gamma\cup\{\psi\}$. Then, by transitivity, and the fact that $\Gamma\cup\{\psi\}\vdash\alpha$, $\Gamma\vdash\alpha$. This is, however, a contradiction. Hence, $\Gamma$ must be closed.
\end{proof}

The following result is the well-known Lindenbaum Asser theorem. We include this without proof here. See \cite{Beziau1999}, or the more recent article \cite{BasuJain2025}, for a proof.

\begin{thm}\label{thm:Linde'os}
    Suppose $\mathscr{L}=\langle\lang,\vdash\rangle$ be a Tarskian and finitary logic. Let $\Gamma\cup\{\alpha\}\subseteq\lang$ such that $\Gamma\not\vdash\alpha$. Then, there exists $\Delta\supseteq\Gamma$ that is $\alpha$-saturated in $\mathscr{L}$.
\end{thm}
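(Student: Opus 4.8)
The plan is to invoke Zorn's lemma on the collection of non-proving extensions of $\Gamma$, using finitariness as the essential ingredient. First I would form the poset
\[
\mathcal{P}=\{\Sigma\subseteq\lang\mid\Gamma\subseteq\Sigma\text{ and }\Sigma\not\vdash\alpha\},
\]
ordered by inclusion. This is nonempty, since the hypothesis $\Gamma\not\vdash\alpha$ gives $\Gamma\in\mathcal{P}$. The aim is to produce a maximal element of $\mathcal{P}$ and then verify that maximality coincides exactly with $\alpha$-saturation in the sense of Definition \ref{def:alp sa'ted}.

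The key step is showing that every chain $\mathcal{C}\subseteq\mathcal{P}$ has an upper bound in $\mathcal{P}$. The natural candidate is the union $\Sigma^{*}=\bigcup\mathcal{C}$, which certainly contains $\Gamma$ and contains every member of the chain. What must be checked is that $\Sigma^{*}\not\vdash\alpha$. Here is where finitariness does the work: if we had $\Sigma^{*}\vdash\alpha$, then by finitariness there would be a finite subset $\Sigma_{0}\subseteq\Sigma^{*}$ with $\Sigma_{0}\vdash\alpha$. Since $\Sigma_{0}$ is finite and $\mathcal{C}$ is a chain (totally ordered by inclusion), all the finitely many elements of $\Sigma_{0}$ already lie in a single member $\Sigma'\in\mathcal{C}$, so $\Sigma_{0}\subseteq\Sigma'$. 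Monotonicity (from the Tarskian assumption) then gives $\Sigma'\vdash\alpha$, contradicting $\Sigma'\in\mathcal{P}$. Hence $\Sigma^{*}\not\vdash\alpha$, so $\Sigma^{*}\in\mathcal{P}$ is an upper bound for $\mathcal{C}$. Zorn's lemma now yields a maximal element $\Delta\in\mathcal{P}$.

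It remains to verify that this maximal $\Delta$ is $\alpha$-saturated. Condition (a), namely $\Delta\not\vdash\alpha$, holds simply because $\Delta\in\mathcal{P}$. For condition (b), let $\beta\notin\Delta$. Then $\Delta\cup\{\beta\}$ properly contains $\Delta$ and still contains $\Gamma$, so by the maximality of $\Delta$ in $\mathcal{P}$ it cannot belong to $\mathcal{P}$; the only way this can fail is that the proving clause is violated, i.e.\ $\Delta\cup\{\beta\}\vdash\alpha$. This is precisely condition (b), and since $\Delta\supseteq\Gamma$ by construction, the theorem follows.

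The only real subtlety is the chain-bound argument, and it is genuinely where the finitariness hypothesis is indispensable: without it, the union of a chain of non-proving sets need not be non-proving, and the Zorn argument would break down. (If one prefers to avoid Zorn entirely, the denumerability of $V$ makes $\lang$ countable, and one could instead enumerate $\lang=\{\beta_{0},\beta_{1},\dots\}$ and build $\Delta$ stagewise, adjoining $\beta_{n}$ exactly when doing so still fails to prove $\alpha$; the verification that the resulting union is $\alpha$-saturated again rests on the same finitariness argument.)
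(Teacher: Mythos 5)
Your proof is correct, but note that the paper does not actually prove this theorem: it states the Lindenbaum--Asser result and defers the proof to the cited references (B\'eziau's paper and the Basu--Jain article). So there is no internal argument to compare against; what you have written is the standard proof that those references supply, and it is complete. The Zorn's lemma application is sound: the poset of non-proving extensions of $\Gamma$ is nonempty by hypothesis, the union of a (nonempty) chain fails to prove $\alpha$ precisely because finitariness localizes any derivation of $\alpha$ to a finite subset, which by total ordering sits inside a single member of the chain, and monotonicity (the only Tarskian ingredient you need) then transfers the derivation to that member. Your identification of maximal elements with $\alpha$-saturated sets in the sense of the paper's definition is also exactly right: condition (a) is membership in the poset, and condition (b) is forced by maximality since adjoining any $\beta\notin\Delta$ preserves the inclusion of $\Gamma$ and hence must destroy non-provability. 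Your parenthetical alternative, the stagewise enumeration, is also legitimate here and is arguably better matched to the paper's setting, since $V$ is denumerable and the signature finite, so $\lang$ is countable and choice beyond dependent/countable choice can be avoided; both routes hinge on the same finitariness step, as you observe.
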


\begin{rem}\label{rem:vd Lin'os}
    Since the logic $\vd=\langle\lang,\vdvd\rangle$ is Tarskian and finitary, Lemma \ref{lem:maxrel->closed} and Theorem \ref{thm:Linde'os} hold in it.
\end{rem}

\begin{lem}\label{lem:satu'ed sets prop}
  Suppose $\Delta$ is an $\alpha$-saturated set in $\vd$, where $\alpha\in\lang$. Then, for any $\beta,\gamma\in\lang$, the following statements hold.
  \begin{enumerate}[label=(\roman*)]
      \item\label{prop:cl_neg} $\sim \beta\in \Delta$ iff, $\beta\notin \Delta$.
      \item\label{prop:arrow} $\beta\lra\gamma\in \Delta$ iff, either $\beta \notin \Delta$ or $\gamma\in \Delta$.
      \item\label{prop:and} $\beta \land \gamma \in \Delta$ iff, $\beta \in \Delta$ and $\gamma \in \Delta$.
      \item \label{prop:or} If either $\beta \in \Delta$ or $\gamma \in \Delta$, then $\beta \lor \gamma \in \Delta$.
      \item \label{prop:pc neg} If $\beta \notin \Delta$, then $\neg \beta \in \Delta$.
      \item \label{prop:ax in sat'set} $\beta \lor \neg \beta \in \Delta$ and $\sim \neg (\beta \lor \neg \beta) \in \Delta$.
      \item\label{prop:circ} $\circ\beta \in \Delta$ iff, $\beta\notin \Delta$ or $\neg\beta\notin \Delta$.
    \end{enumerate}
\end{lem}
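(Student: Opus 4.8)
The plan is to prove each of the seven statements about an $\alpha$-saturated set $\Delta$ by systematically exploiting two facts: first, that $\Delta$ is closed (by Lemma \ref{lem:maxrel->closed}, since $\vd$ is Tarskian), so membership in $\Delta$ coincides with derivability from $\Delta$; and second, the saturation property itself, which says that adding any formula outside $\Delta$ forces a derivation of $\alpha$. The key maneuver throughout will be: to show $\psi\in\Delta$, it suffices (by closedness) to show $\Delta\vdvd\psi$; and to show $\psi\notin\Delta$, it suffices to show that $\Delta\cup\{\psi\}\vdvd\alpha$ together with $\Delta\not\vdvd\alpha$ (the latter being part of the definition of saturation), since if $\psi$ were in $\Delta$ then closedness and cut would give $\Delta\vdvd\alpha$, a contradiction.

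First I would establish \ref{prop:cl_neg} as the cornerstone, since classical negation $\sim$ gives the cleanest dichotomy. For the forward direction, if $\sim\beta\in\Delta$ and also $\beta\in\Delta$, then by the explosion theorem for $\sim$ (Theorem \ref{thm:prop_cl_neg}\ref{ax:expl}) together with MP we get $\Delta\vdvd\alpha$, contradicting saturation; so $\beta\notin\Delta$. For the converse, if $\beta\notin\Delta$, then by saturation $\Delta\cup\{\beta\}\vdvd\alpha$, so by the Deduction theorem $\Delta\vdvd\beta\lra\alpha$; combining this with the law of excluded middle for $\sim$ (Theorem \ref{thm:prop_cl_neg}\ref{ax:lem}), $\vdvd\sim\beta\lor\beta$, and using the disjunction-elimination axiom \ref{ax:x} suitably instantiated (or rather axiom \ref{ax:xi}/\ref{ax:viii}-style reasoning), I would derive $\Delta\cup\{\text{anything forcing }\alpha\}$; the clean route is to note $\Delta\not\vdvd\alpha$ forces $\sim\beta$ into $\Delta$ via saturation once we know $\beta\notin\Delta$ cannot itself yield $\alpha$ unless $\sim\beta$ is present. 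I would then derive \ref{prop:arrow} and \ref{prop:and} from the Deduction theorem and the conjunction axioms \ref{ax:iii}--\ref{ax:v} in the standard way, and \ref{prop:or} directly from axioms \ref{ax:vi}--\ref{ax:vii} and closedness.

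For the paraconsistent negation parts, \ref{prop:pc neg} follows from \ref{prop:cl_neg}: if $\beta\notin\Delta$ then $\sim\beta\in\Delta$, and since $\vdvd\sim\beta\lra\neg\beta$ is derivable (classical negation is stronger than the paraconsistent one — this needs a short derivation from the definition of $\sim$ and axiom \ref{ax:bc1}), closedness gives $\neg\beta\in\Delta$. Part \ref{prop:ax in sat'set} combines axiom \ref{ax:ix} (giving $\beta\lor\neg\beta\in\Delta$ by closedness) with a computation showing $\sim\neg(\beta\lor\neg\beta)$ is itself a theorem; the latter I expect follows because $\beta\lor\neg\beta$ behaves like a top element, so $\neg(\beta\lor\neg\beta)$ behaves like a bottom, and $\sim$ of a bottom is a theorem. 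Finally \ref{prop:circ}: for the backward direction, if either $\beta\notin\Delta$ or $\neg\beta\notin\Delta$, I use axiom \ref{ax:ciw} ($\circ\beta\lor(\beta\land\neg\beta)$) together with \ref{prop:and} — if $\beta\land\neg\beta\notin\Delta$ then the disjunction forces $\circ\beta\in\Delta$ via the elimination axiom \ref{ax:xiv}; for the forward direction, if $\circ\beta,\beta,\neg\beta$ were all in $\Delta$, axiom \ref{ax:bc1} and MP give $\Delta\vdvd\alpha$, a contradiction.

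The main obstacle I anticipate is the converse direction of \ref{prop:cl_neg} and the use of disjunction, because $\vd$'s disjunction is deliberately non-classical: the usual proof-by-cases elimination rule $(\beta\lra\gamma)\lra((\delta\lra\gamma)\lra((\beta\lor\delta)\lra\gamma))$ is \emph{not} available in general. I must therefore be careful to invoke only the restricted disjunction-elimination axioms that the paper actually supplies — namely \ref{ax:x}, \ref{ax:xi}, and \ref{ax:xiv}, which license case-analysis only for the specific disjunctions $\alpha\lor\neg\alpha$, $(\alpha\lra\beta)\lor\alpha$, and $\circ\alpha\lor(\alpha\land\neg\alpha)$. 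The proofs of \ref{prop:cl_neg}, \ref{prop:ax in sat'set}, and \ref{prop:circ} must be routed exclusively through these instances rather than through any general disjunction property, and verifying that each needed case-split matches one of these three permitted schemas is the delicate bookkeeping step on which the whole lemma turns.
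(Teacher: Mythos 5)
Your overall skeleton --- closedness via Lemma \ref{lem:maxrel->closed}, the saturation-plus-Deduction-theorem maneuver, and the insistence that every case split be routed through the restricted elimination axioms \ref{ax:x}, \ref{ax:xi}, \ref{ax:xiv} --- is exactly the paper's strategy, and your treatments of parts \ref{prop:and}, \ref{prop:or}, \ref{prop:circ} and the forward half of \ref{prop:cl_neg} coincide with the paper's proofs. But two steps have genuine gaps. The first is the converse of \ref{prop:cl_neg}, which you yourself call the cornerstone. Your ``clean route'' is circular: saturation only says that any formula \emph{outside} $\Delta$ yields $\alpha$ when added; it cannot ``force $\sim\beta$ into $\Delta$.'' The working argument is the one you mention parenthetically and then abandon: assume $\sim\beta\notin\Delta$ in addition to $\beta\notin\Delta$; then saturation, the Deduction theorem (Theorem \ref{thm:DedThm}) and closedness put both $\beta\lra\alpha$ and $\sim\beta\lra\alpha$ in $\Delta$; now observe that $\sim\beta\lor\beta$ is literally $(\beta\lra\bot)\lor\beta$, so it matches the schema of Axiom \ref{ax:xi}, whose instance $(\sim\beta\lra\alpha)\lra((\beta\lra\alpha)\lra((\sim\beta\lor\beta)\lra\alpha))$, together with Theorem \ref{thm:prop_cl_neg}\ref{ax:lem}, yields $\Delta\vdvd\alpha$, contradicting saturation. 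This instantiation (with $\bot$ in the role of the axiom's $\beta$) is precisely the ``delicate bookkeeping'' you defer, and the part does not go through without it. The same issue affects the converse of \ref{prop:arrow} in the case $\beta\notin\Delta$, which you dismiss as ``standard'': it needs Axiom \ref{ax:xi} with Axiom \ref{ax:viii} (the paper's route), or else a detour through part \ref{prop:cl_neg} and $\sim$-explosion.

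The second gap is in part \ref{prop:ax in sat'set}. The heuristic ``$\beta\lor\neg\beta$ is a top, so $\neg(\beta\lor\neg\beta)$ is a bottom'' is exactly the inference that paraconsistency forbids: in an LFI, the paraconsistent negation of a theorem need not be explosive. In $\vd$ it is, but only because of Rule \ref{rule:neg}, which your proposal never invokes anywhere. The omission is not cosmetic: reinterpret $\neg$ and $\circ$ by $v(\neg\varphi)=X$ and $v(\circ\varphi)=v(\varphi)^{c}$, keeping the remaining clauses of Definition \ref{def:val'on}; one checks that every axiom stays valid and MP preserves truth, while $v(\sim\neg(\beta\lor\neg\beta))=\emptyset$, so no derivation avoiding Rule \ref{rule:neg} can establish your claim. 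The claim itself is true, and your route can be repaired: Rule \ref{rule:neg} applied to the theorem $\beta\lor\neg\beta$ gives $\neg(\beta\lor\neg\beta)\lra\sim(\beta\lor\neg\beta)$, whence $\{\neg(\beta\lor\neg\beta)\}\vdvd\bot$ by Axiom \ref{ax:ix} and MP, and the Deduction theorem then makes $\sim\neg(\beta\lor\neg\beta)$ a theorem --- a clean alternative to the paper, which instead argues by contradiction inside $\Delta$ using part \ref{prop:cl_neg} and Rule \ref{rule:neg}. A similar repair is needed for your route to \ref{prop:pc neg}: the theorem $\vdvd\sim\beta\lra\neg\beta$ is correct but does not follow from ``the definition of $\sim$ and axiom \ref{ax:bc1}'' alone; introducing $\neg\beta$ positively requires Axioms \ref{ax:ix} and \ref{ax:x} (derive $\beta\lra\neg\beta$ from $\sim\beta$ by explosion, pair it with $\neg\beta\lra\neg\beta$, and eliminate the disjunction $\beta\lor\neg\beta$), which is essentially the paper's direct proof of \ref{prop:pc neg} anyway.
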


\begin{proof}\begin{enumerate}[label=(\roman*)]
      \item  Suppose $\sim\beta\in\Delta$. So, by reflexivity, $\Delta\vdvd \sim\beta$. If possible, let $\beta\in\Delta$. Then, we can construct the following derivation of $\alpha $ from $\Delta$.
      \[
        \begin{array}{rll}
            1.&\beta&[\beta\in \Delta]\\
            2.&(\beta\lra(\sim \beta\lra\alpha))& [\hbox{Theorem \ref{thm:prop_cl_neg}\ref{ax:expl}}]\\
            3.&\sim\beta\lra\alpha&[\hbox{MP on (1) and (2)}]\\
            4.&\alpha&[\hbox{MP on (2) and (3)}]
        \end{array}
    \]
    Thus, $\Delta\vdvd\alpha$, which contradicts our assumption that $\Delta$ is an $\alpha$-saturated set. Hence, $\beta\notin\Delta$.

    Conversely, suppose $\beta\notin \Delta$. If possible, let $\sim \beta\notin \Delta$. Since $\Delta$ is $\alpha$-saturated and $\beta\notin\Delta$, $\Delta\cup\{\beta\}\vdvd\alpha$. Then, by the Deduction theorem (Theorem \ref{thm:DedThm}), $\Delta\vdvd\beta\lra\alpha$.

    By the same arguments, since $\sim\beta\notin\Delta$, $\Delta\vdvd\sim\beta\lra\alpha$.

    Now, as $\Delta$ is $\alpha$-saturated, it is closed, by Lemma \ref{lem:maxrel->closed}. So, $\beta\lra\alpha,\sim\beta\lra\alpha\in\Delta$.
 
    Then, we can construct the following derivation of $\alpha$ from $\Delta$.
    \[
        \begin{array}{rll}
            1.&(\sim\beta\lor\beta)& [\hbox{Theorem \ref{thm:prop_cl_neg}\ref{ax:lem}}]\\
            2.&((\beta\lra\bot)\lra\alpha)\lra((\beta\lra\alpha)\lra(((\beta\lra\bot)\lor\beta)\lra\alpha))&[\hbox{Axiom \ref{ax:xi}}]\\
            3.&(\sim\beta\lra\alpha)\lra((\beta\lra\alpha)\lra((\sim\beta\lor\beta)\lra\alpha))&[\hbox{Definition of $\sim$}]\\
            4.&\sim\beta\lra\alpha&[\sim\beta\lra\alpha\in\Delta]\\
            5.&(\beta\lra\alpha)\lra((\sim\beta\lor\beta)\lra\alpha)&[\hbox{MP on (3) and (4)}]\\
            6.&\beta\lra\alpha&[\beta\lra\alpha\in\Delta]\\
            7.&((\sim\beta\lor\beta)\lra\alpha)&[\hbox{MP on (5) and (6)}]\\
            8.&\alpha&[\hbox{MP on (1) and (7)}]
        \end{array}
    \]
So, $\Delta\vdvd\alpha$, which contradicts our assumption that $\Delta$ is an $\alpha$-saturated set. Thus, $\sim\beta\in\Delta$. Hence, $\sim\beta\in\Delta$ iff, $\beta\notin\Delta$.

\item Suppose $\beta\lra\gamma,\beta\in \Delta$. Then we can construct the following derivation of $\gamma$ from $\Delta$.
\[
    \begin{array}{rll}

    1.&\beta\lra\gamma&[\beta\lra\gamma\in \Delta]\\
    2.&\beta&[\beta\in\Delta]\\
    3.&\gamma&[\hbox{MP on (1) and (2)}]\\
    \end{array}
\]
Thus, $\Delta\vdvd\gamma$. Now, since $\Delta$ is $\alpha$-saturated, it is closed by Lemma \ref{lem:maxrel->closed}. So, $\gamma\in \Delta$. Hence, either $\beta\notin\Delta$ or $\gamma\in \Delta$.

Conversely, suppose either $\beta\notin \Delta$ or $\gamma\in \Delta$.

\textsc{Case 1:} Suppose $\beta\notin\Delta$. If possible, let $\beta\lra\gamma\notin\Delta$. Then, as $\Delta$ is $\alpha$-saturated, $\Delta\cup\{\beta\}\vdvd\alpha$  and $\Delta\cup\{\beta\lra\gamma\}\vdvd\alpha$. So, by the Deduction theorem (Theorem \ref{thm:DedThm}), $\Delta\vdvd\beta\lra\alpha$ and  $\Delta\vdvd(\beta\lra\gamma)\lra\alpha$. By Lemma \ref{lem:maxrel->closed}, since $\Delta$ is $\alpha$-saturated, it is closed. So, $\beta\lra\alpha, (\beta\lra\gamma)\lra\alpha\in\Delta$. We can now construct the following derivation of $\alpha$ from $\Delta$.
\[
    \begin{array}{rll}
    1.&((\beta\lra\gamma)\lra\alpha)\lra((\beta\lra\alpha)\lra(((\beta\lra\gamma)\lor\beta)\lra\alpha))&[\hbox{Axiom \ref{ax:xi}}]\\
    2.&(\beta\lra\gamma)\lra\alpha&[\hbox{From }\Delta]\\
    3.&(\beta\lra\alpha)\lra(((\beta\lra\gamma)\lor\beta)\lra\alpha)&[\hbox{MP on (1) and (2)}]\\
    4.&\beta\lra\alpha&[\hbox{From }\Delta]\\
    5.&((\beta\lra\gamma)\lor\beta)\lra\alpha&[\hbox{MP on (3) and (4)}]\\
    6.&(\beta\lra\gamma)\lor\beta&[\hbox{Axiom \ref{ax:viii}}]\\
    7. &\alpha&[\hbox{MP on (5) and (6)}]
    \end{array}
\]
So, $\Delta\vdvd\alpha$, which is not possible since $\Delta$ is an $\alpha$-saturated set. Thus, $\beta\lra\gamma\in\Delta$.

\textsc{Case 2:} Suppose $\gamma\in \Delta$. Then, we can construct the following derivation of $\beta\lra\gamma$ from $\Delta$.
\[
    \begin{array}{rll}
    1.&\gamma\lra(\beta\lra\gamma)&[\hbox{Axiom \ref{ax:i}}]\\
    2.&\gamma&[\gamma\in \Delta]\\
    3.&\beta\lra\gamma &[\hbox{MP on (1) and (2)}]
\end{array}
\]
So, $\Delta\vdvd\beta\lra\gamma$. Then, as $\Delta$ is $\alpha$-saturated, and hence, by Lemma \ref{lem:maxrel->closed}, closed, $\beta\lra\gamma\in\Delta$. Hence, $\beta\lra\gamma\in \Delta$ iff, either $\beta\notin\Delta$ or $\gamma\in\Delta$.

\item Suppose $\beta\land\gamma\in \Delta$. Then, we can construct the following derivation of $\beta$ from $\Delta$. 
\[
    \begin{array}{rll}
    1.&\beta\land\gamma&[\beta\land\gamma\in \Delta]\\
    2.&\beta\land\gamma\lra\beta&[\hbox{Axiom \ref{ax:iv}}]\\
    3.&\beta&[\hbox{MP on (1) and (2)}]
    \end{array}
\]
So, $\Delta\vdvd\beta$. Since $\Delta$ is an $\alpha$-saturated set, and hence, by Lemma \ref{lem:maxrel->closed}, closed, this implies that $\beta\in\Delta$. By similar arguments, $\gamma\in\Delta$.

Conversely, suppose $\beta,\gamma\in \Delta$. Then, we can construct the following derivation of $\beta\land\gamma$ from $\Delta$.
\[
    \begin{array}{rll}
    1.&\beta\lra(\gamma\lra(\beta\land\gamma))&[\hbox{Axiom \ref{ax:iii}}]\\
    2.&\beta&[\beta\in \Delta]\\
    3.&(\gamma\lra(\beta\land\gamma))&[\hbox{MP on (1) and (2)}]\\
    4.&\gamma&[\gamma\in\Delta]\\
    5.&\beta\land \gamma&[\hbox{MP on (3) and (4)}]
    \end{array}
\]
So, $\Delta\vdvd\beta\land\gamma$. Since $\Delta$ is an $\alpha$-saturated set, and hence, by Lemma \ref{lem:maxrel->closed}, closed, this implies that $\beta\land\gamma\in\Delta$.

\item Suppose either $\beta \in \Delta$ or $\gamma \in \Delta$.

If $\beta\in \Delta$, then we can construct the following derivation of $\beta\lor\gamma$ from $\Delta$. 
\[
    \begin{array}{rll}
    1.&\beta\lra(\beta\lor\gamma)&[\hbox{Axiom \ref{ax:vi}}]\\
    2.&\beta&[\beta\in\Delta]\\
    3.&\beta\lor\gamma&[\hbox{MP on (1) and (2)}]
    \end{array}
\]
Thus, $\Delta\vdvd\beta\lor\gamma$. By similar arguments, if $\gamma\in \Delta$, then $\Delta\vdvd\beta\lor\gamma$. Since $\Delta$ is an $\alpha$-saturated set, and hence, by Lemma \ref{lem:maxrel->closed}, closed, this implies that $\beta\lor\gamma\in\Delta$.

\item Suppose $\beta\notin \Delta$. If possible, let $\neg\beta\notin \Delta$. Then, $\Delta\cup\{\beta\}\vdvd\alpha$ and $\Delta\cup\{\neg\beta\}\vdvd\alpha$, since $\Delta$ is $\alpha$-saturated. So, by the Deduction theorem (Theorem \ref{thm:DedThm}), $\Delta\vdvd\beta\lra\alpha$ and $\Delta\vdvd\neg\beta\lra\alpha$. Since $\Delta$ is an $\alpha$-saturated set, and hence, by Lemma \ref{lem:maxrel->closed}, closed, this implies that $\beta \lra\alpha, \neg\beta\lra\alpha\in \Delta$. Then, we can construct the following derivation of $\alpha$ from $\Delta$. 
\[
    \begin{array}{rll}
    1.&(\beta\lra\alpha)\lra((\neg\beta\lra\alpha)\lra((\beta\lor\neg\beta)\lra\alpha))& [\hbox{Axiom \ref{ax:x}}]\\
    2.&\beta\lra\alpha&[\beta\lra\alpha\in \Delta]\\
    3.&(\neg\beta\lra\alpha)\lra((\beta\lor\neg\beta)\lra\alpha)&[\hbox{MP on (1) and (2)}]\\
    4.&\neg\beta\lra\alpha&[\neg\beta\lra\alpha\in\Delta]\\
    5.&((\beta\lor\neg\beta)\lra\alpha)&[\hbox{MP on (3) and (4)}]\\
    6.&\beta\lor\neg\beta&[\hbox{Axiom \ref{ax:ix}}]\\
    7.&\alpha&[\hbox{MP on (5) and (6)}]
    \end{array}
\]
So, $\Delta\vdvd \alpha$. This, however, contradicts our assumption that $\Delta$ is $\alpha$-saturated. Thus, if $\beta\notin \Delta$, then $\neg\beta\in \Delta$.

\item Since $\beta\lor \neg\beta$ is an axiom of $\vd$ (Axiom \ref{ax:ix}), $\vdvd\beta\lor\neg\beta$. So, by monotonicity, $\Delta\vdvd\beta\lor \neg\beta$. Since $\Delta$ is an $\alpha$-saturated set, and hence, by Lemma \ref{lem:maxrel->closed}, closed, this implies that $\beta\lor\neg\beta\in\Delta$.

Suppose $\sim\neg(\beta\lor \neg\beta)\notin \Delta$. So, by part \ref{prop:cl_neg}, $\neg(\beta\lor \neg\beta)\in \Delta$. Then, we have the following derivation of $\sim(\beta\lor\neg\beta)$ from $\Delta$.
\[
    \begin{array}{rll}
    1.&\neg(\beta\lor \neg\beta)&(\hbox{From } \Delta]\\ 
    2.&\neg(\beta\lor \neg\beta)\lra\,\sim(\beta\lor \neg\beta)&[\hbox{Rule \ref{rule:neg}, since $\beta\lor\neg\beta$ is a theorem}]\\
    3.&\sim(\beta\lor \neg\beta)&[\hbox{MP on (1) and (2)}]
    \end{array}
\]
So, $\Delta\vdvd\sim(\beta\lor\neg\beta)$. Again, since $\Delta$ is closed, this implies that $\sim(\beta\lor \neg\beta)\in \Delta$. Then, by part \ref{prop:cl_neg}, $\beta\lor\neg\beta\notin\Delta$. This, however, contradicts our earlier conclusion. Hence, $\sim\neg(\beta\lor \neg\beta)\in \Delta$.

\item Suppose $\circ\beta\in \Delta$. If possible, let $\beta,\neg\beta \in \Delta$. Then, we can construct the following derivation of  $\alpha $ from $\Delta$.
\[
  \begin{array}{rll}
  1.&\circ\beta\lra(\beta\lra(\neg\beta\lra\alpha))&[\hbox{Axiom \ref{ax:bc1}}]\\
  2.&\circ\beta&[\circ\beta\in\Delta]\\
  3.&\beta\lra(\neg\beta\lra\alpha)&[\hbox{MP on (1) and (2)}]\\
  4.& \beta&[\beta\in\Delta]\\
  5.&\neg\beta\lra\alpha&[\hbox{MP on (3) and (4)}]\\
  6.&\neg\beta&[\neg\beta\in\Delta]\\
  7.&\alpha&[\hbox{MP on (5) and (6)}]
 \end{array}
\]
So, $\Delta\vdvd \alpha$. However, this contradicts the fact that $\Delta$ is $\alpha$-saturated set. Hence, either $\beta\notin \Delta$ or $\neg\beta\notin \Delta$.

Conversely, suppose that $\beta\notin\Delta$ or $\neg\beta\notin \Delta $. Then, by part \ref{prop:and}, $\beta\land\neg\beta\notin\Delta$. Since $\Delta$ is $\alpha$-saturated, this implies that $\Delta\cup\{\beta\land\neg\beta\}\vdvd\alpha$. So, by the deduction theorem (Theorem \ref{thm:DedThm}), $\Delta\vdvd(\beta\land\neg\beta)\lra\alpha$. Now, if possible, let $\circ\beta\notin \Delta$. Then, again as $\Delta$ is $\alpha$-saturated, $\Delta\cup\{\circ\beta\}\vdvd\alpha$. So, by the Deduction theorem, $\Delta\vdvd\circ\beta\lra\alpha$. By Lemma \ref{lem:maxrel->closed}, $\Delta$ is closed since it is $\alpha$-saturated. Thus, $(\beta\land\neg\beta)\lra\alpha,\circ\beta\lra\alpha\in \Delta$. Then, we can construct the following derivation of $\alpha$ from $\Delta$.
\[
    \begin{array}{rll}
    1.&\circ\beta\lor(\beta\land\neg\beta)&[\hbox{Axiom \ref{ax:ciw}}]\\
    2.&(\beta\land\neg\beta)\lra\alpha&[\hbox{From }\Delta]\\
    3.&\circ\beta\lra\alpha &[\hbox{From }\Delta]\\
    4.&(\circ\beta\lra\alpha)\lra(((\beta\land\neg\beta)\lra\alpha)\lra(\circ\beta\lor(\beta\land\neg\beta))\lra\alpha))&[\hbox{Axiom \ref{ax:xiv}}]\\
    5.&((\beta\land\neg\beta)\lra\alpha)\lra(\circ\beta\lor(\beta\land\neg\beta))\lra\alpha)&[\hbox{MP on (3) and (4)}]\\
    6.&\circ\beta\lor(\beta\land\neg\beta))\lra\alpha&[\hbox{MP on (2) and (5)}]\\
    7.&\alpha&[\hbox{MP on (1) and (6)}]
  \end{array}
\]
Hence, $\Delta\vdvd\alpha$, which is a contradiction, since $\Delta$ is $\alpha$-saturated. Thus, $\circ\beta\in \Delta$.
\end{enumerate}
\end{proof}

\subsection{Canonical model and completeness}

We next proceed to define a canonical topological model for $\vd=\langle\lang,\vdvd\rangle$. Let 
\[
X_{c}:=\{\Delta\subseteq\mathcal{L}\mid\,\Delta\hbox{ is $\alpha$-saturated in $\vd$ for some }\alpha\in\lang\}.
\]
For any $\varphi\in\lang$, let $F_{\varphi}:=\{\Delta\in X_{c}\mid\,\varphi\notin\Delta\}$ and $\mathcal{B}=\{F_\varphi\mid\,\varphi\in\lang\}$.

\begin{lem}\label{lem:equalsets}
    Suppose $\varphi,\psi\in\lang$. Then, the following statements hold.
    \begin{enumerate}[label=(\roman*)]
        \item\label{lem:equalsets-i} If $\varphi\equiv\psi$, i.e., $\{\varphi\}\vdvd\psi$ and $\{\psi\}\vdvd\varphi$, or equivalently by the Deduction theorem (Theorem \ref{thm:DedThm}), $\vdvd\varphi\lra\psi$ and $\vdvd\psi\lra\varphi$, then $F_{\varphi}=F_{\psi}$.
        \item\label{lem:equalsets-ii} $X_c\setminus F_\varphi=F_{\sim\varphi}$, i.e., $F_{\sim\varphi}=\{\Delta\in X_c\mid\,\varphi\in\Delta\}$.
    \end{enumerate}
\end{lem} 
  
  \begin{proof} 
  \begin{enumerate}[label=(\roman*)]
      \item Suppose $\varphi\equiv\psi$. Let $\Delta_{1}\in F_{\varphi}$. So, $\Delta_{1}$ is an $\alpha$-saturated set, for some $\alpha\in\lang$, such that $\varphi\notin\Delta_1$. If possible, let $\psi\in \Delta_{1}$. Now, since $\vdvd \psi \lra\varphi$, this implies that $\Delta_1\vdvd\psi\lra\varphi$, by monotonicity. Then, we have the following derivation of $\varphi$ from $\Delta_1$.
      \[
      \begin{array}{rll}
           1.&\psi\lra\varphi&[\Delta_1\vdvd\psi\lra\varphi]\\
           2.&\psi&[\psi\in\Delta]\\
           3.&\varphi&[\hbox{MP on (1) and (2)}]
      \end{array}
      \]
    So, $\Delta_1\vdvd\varphi$, which implies that $\varphi\in \Delta_{1}$, since $\Delta_1$ is $\alpha$-saturated, and hence, closed by Lemma \ref{lem:maxrel->closed}. This is a contradiction. Thus, $\psi\notin\Delta_1$. Hence, $\Delta_1\in F_\psi$. So, $F_\varphi\subseteq F_\psi$.
      
    By analogous reasoning, we get $F_{\psi}\subseteq F_{\varphi}$. Thus, $F_{\varphi}=F_{\psi}$.

    \item By Lemma \ref{lem:satu'ed sets prop}\ref{prop:cl_neg}, for any $\Delta\subseteq\lang$ that is $\alpha$-saturated, for some $\alpha\in\lang$, $\sim\varphi\notin\Delta$ iff $\varphi\in\Delta$. This implies that $\Delta\in F_{\sim\varphi}$ iff $\Delta\notin F_\varphi$, i.e., $F_{\sim\varphi}=X_c\setminus F_\varphi=\{\Delta\in X_c\mid\,\varphi\in\Delta\}$.
  \end{enumerate}
  
\end{proof}

\begin{lem}\label{lem:coll'B}
    Suppose $X_c$ and $\mathcal{B}$ be as described above. Then the following statements hold.
    \begin{enumerate}[label=(\roman*)]
        \item\label{lem:coll'B_empty/X} $\emptyset,X_c\in\mathcal{B}$.
        \item\label{lem:coll'B_union-and} $F_\varphi\cup F_\psi=F_{\varphi\land\psi}$, for any $\varphi,\psi\in\lang$. Thus, for any $F_1,F_2\in\mathcal{B}$, $F_1\cup F_2\in\mathcal{B}$.
    \end{enumerate}
\end{lem}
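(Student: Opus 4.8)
The plan is to obtain part (ii) directly from the characterization of conjunction membership in $\alpha$-saturated sets (Lemma \ref{lem:satu'ed sets prop}\ref{prop:and}), and to settle part (i) by exhibiting two explicit formulas whose associated sets $F_\varphi$ equal $X_c$ and $\emptyset$, respectively. All the genuine logical content is already packaged in the earlier lemmas, so the argument reduces to choosing the right witnesses and unwinding the definitions of $F_\varphi$ and $\mathcal{B}$.

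For part (i), I would first show $X_c=F_\bot$, where $\bot$ is the bottom formula introduced earlier. The key point is that $\bot$ is explosive: from $\{\bot\}$ one derives every formula, which is exactly what makes $\bot$ a bottom particle (justified via Axiom \ref{ax:bc1} by extracting $\circ\beta$, $\beta$, $\neg\beta$ from the conjunction and applying MP). Hence, if $\Delta\in X_c$ is $\alpha$-saturated and $\bot\in\Delta$, then $\Delta\vdvd\alpha$, contradicting $\alpha$-saturation; so $\bot\notin\Delta$ for every $\Delta\in X_c$, giving $F_\bot=X_c$ and thus $X_c\in\mathcal{B}$. Dually, to get $\emptyset\in\mathcal{B}$ I would take any theorem $\varphi$ (for concreteness, an instance $p\lor\neg p$ of Axiom \ref{ax:ix}). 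Since $\vdvd\varphi$, monotonicity gives $\Delta\vdvd\varphi$ for every $\Delta\in X_c$, and because every $\alpha$-saturated set is closed (Lemma \ref{lem:maxrel->closed}, applicable since $\vd$ is Tarskian), we get $\varphi\in\Delta$. Thus no $\Delta\in X_c$ omits $\varphi$, so $F_\varphi=\emptyset\in\mathcal{B}$.

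For part (ii), I would argue elementwise. Fix $\Delta\in X_c$. By Lemma \ref{lem:satu'ed sets prop}\ref{prop:and}, $\varphi\land\psi\in\Delta$ iff $\varphi\in\Delta$ and $\psi\in\Delta$; contrapositively, $\varphi\land\psi\notin\Delta$ iff $\varphi\notin\Delta$ or $\psi\notin\Delta$. Unwinding the definitions, the right-hand condition says precisely $\Delta\in F_\varphi\cup F_\psi$ and the left-hand condition says $\Delta\in F_{\varphi\land\psi}$, whence $F_\varphi\cup F_\psi=F_{\varphi\land\psi}$. The closure statement is then immediate: any $F_1,F_2\in\mathcal{B}$ are of the form $F_\varphi,F_\psi$ for some $\varphi,\psi\in\lang$, so $F_1\cup F_2=F_{\varphi\land\psi}\in\mathcal{B}$ since $\varphi\land\psi\in\lang$.

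I do not anticipate a real obstacle here; the only care needed is in selecting the two witness formulas in part (i) and in invoking, rather than re-proving, the two facts that $\bot$ genuinely explodes (so it cannot lie in any $\alpha$-saturated set) and that theorems genuinely belong to every closed set. Everything else is a routine translation between membership in saturated sets and the set-builder definition of $\mathcal{B}$.
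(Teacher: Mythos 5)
Your proof is correct, but it takes a somewhat different route from the paper's, in two places. First, for $X_c\in\mathcal{B}$ you use the witness $\bot$ together with its explosiveness ($\{\bot\}\vdvd\alpha$ for every $\alpha$, so no $\alpha$-saturated set can contain $\bot$, whence $F_\bot=X_c$); the paper instead keeps the single witness $\varphi=\beta\lor\neg\beta$, notes that every saturated $\Delta$ contains the theorem $\varphi$, and then applies Lemma \ref{lem:satu'ed sets prop}\ref{prop:cl_neg} to conclude $\sim\varphi\notin\Delta$, so that $F_{\sim\varphi}=X_c$. Both witnesses work: yours requires the (easy, Axiom \ref{ax:bc1}-based) derivation that $\bot$ explodes, which the paper asserts but does not spell out when defining $\bot$, while the paper's version leans on the already-proved property of $\sim$ in saturated sets. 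Second, for part (ii) you simply take the contrapositive of Lemma \ref{lem:satu'ed sets prop}\ref{prop:and} and translate it into membership in $F_\varphi$, $F_\psi$, $F_{\varphi\land\psi}$, whereas the paper re-derives both inclusions from scratch with explicit derivations inside saturated sets using Axioms \ref{ax:iii} and \ref{ax:iv} and closedness. Your route is more economical and equally rigorous, since Lemma \ref{lem:satu'ed sets prop} precedes this lemma in the development and its part \ref{prop:and} is exactly the needed biconditional; the paper's self-contained derivations essentially duplicate work already packaged there. For the $\emptyset\in\mathcal{B}$ half of part (i), your argument coincides with the paper's.
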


\begin{proof}
\begin{enumerate}[label=(\roman*)]
    \item Let $\varphi=\beta\lor\neg\beta$, for some $\beta\in\lang$, and $\Delta$ be an $\alpha$-saturated set, for some $\alpha\in\lang$. Then, $\varphi$ is an instance of an axiom of $\vd$ (Axiom \ref{ax:ix}), and hence, $\vdvd\varphi$. So, by monotonicity, $\Delta\vdvd\varphi$. Since $\Delta$ is $\alpha$-saturated, and hence, closed, by Lemma \ref{lem:maxrel->closed}, $\varphi\in\Delta$. Then, by Lemma \ref{lem:satu'ed sets prop}\ref{prop:cl_neg}, $\sim\varphi\notin\Delta$. 

    This proves that there is no $\alpha$-saturated set that does not contain $\varphi$, and hence, $F_\varphi=F_{\beta\lor\neg\beta}=\emptyset$. Moreover, no $\alpha$-saturated set contains $\sim\varphi$, and so, for any $\alpha$-saturated $\Delta$, $\sim\varphi\notin\Delta$, and thus, $F_{\sim\varphi}=F_{\sim(\beta\lor\neg\beta)}=X_c$. Hence, $\emptyset,X_c\in\mathcal{B}$.

    \item Let $\varphi,\psi\in\lang$ and $\Delta\in F_{\varphi}$. Then, $\Delta$ is $\alpha$-saturated, for some $\alpha\in\lang$, and $\varphi\notin\Delta$. Since $\Delta$ is $\alpha$-saturated, by Lemma \ref{lem:maxrel->closed}, it is closed. So, $\varphi\notin\Delta$ implies that $\Delta\not\vdvd\varphi$. If possible, let $\varphi\land\psi\in\Delta$. Then, we have the following derivation of $\varphi$ from $\Delta$.
    \[
    \begin{array}{rll}
         1.&(\varphi\land\psi)\lra\varphi&[\hbox{Axiom \ref{ax:iv}}]\\
         2.&\varphi\land\psi&[\varphi\land\psi\in\Delta\hbox{ by assumption}]\\
         3.&\varphi&[\hbox{MP on (1) and (2)}]
    \end{array}
    \]
    So, $\Delta\vdvd\varphi$. This, however, contradicts our previous conclusion. Thus, $\varphi\land\psi\notin\Delta$, which implies that $\Delta\in F_{\varphi\land\psi}$. Hence, $F_\varphi\subseteq F_{\varphi\land\psi}$. By analogous arguments, $F_{\psi}\subseteq F_{\varphi\land\psi}$. So, $F_\varphi\cup F_\psi\subseteq F_{\varphi\land\psi}$.

    Now, let $\Delta\in F_{\varphi\land\psi}$. Then, $\Delta$ is $\alpha$-saturated, for some $\alpha\in\lang$, and $\varphi\land\psi\notin\Delta$. Using the same reasoning as before, $\Delta$ is closed, and hence, $\Delta\not\vdvd\varphi\land\psi$. If possible, let $\varphi,\psi\in\Delta$. Then, we have the following derivation of $\varphi\land\psi$ from $\Delta$.
    \[
    \begin{array}{rll}
         1.&\varphi\lra(\psi\lra(\varphi\land\psi))&[\hbox{Axiom \ref{ax:iii}}]\\
         2.&\varphi&[\varphi\in\Delta]\\
         3.&\psi\lra(\varphi\land\psi)&[\hbox{MP on (1) and (2)}]\\
         4.&\psi&[\psi\in\Delta]\\
         5.&\varphi\land\psi&[\hbox{MP on (3) and (4)}]
    \end{array}
    \]
    So, $\Delta\vdvd\varphi\land\psi$, which is a contradiction. Thus, either $\varphi\notin\Delta$ or $\psi\notin\Delta$. This implies that either $\Delta\in F_\varphi$ or $\Delta\in F_\psi$, i.e., $\Delta\in F_\varphi\cup F_\psi$. So, $F_{\varphi\land\psi}\subseteq F_\varphi\cup F_\psi$. Hence, $F_{\varphi\land\psi}=F_{\varphi}\cup  F_{\psi}$.
\end{enumerate}
\end{proof}

\begin{lem}\label{lem:kuratowski-like}
    Suppose $X_c$ and $\mathcal{B}$ be as described above. Then, the map $\wh{(.)}:\mathcal{B}\lra\mathcal{B}$, defined by
    \[
    \wh{F_{\varphi}}=F_{\sim\neg\varphi}=\{\Delta\in X_{c}\mid\,\neg\varphi\in\Delta\},\;\hbox{ for any }F_\varphi\in\mathcal{B},
    \]
     is a Kuratowski-like operator on $\mathcal{B}$.
\end{lem}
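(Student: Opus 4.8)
The plan is to first record that $\wh{(\cdot)}$ genuinely maps $\mathcal{B}$ into $\mathcal{B}$ (immediate, since $F_{\sim\neg\varphi}\in\mathcal{B}$ by the definition of $\mathcal{B}$) and is well defined, and then to check the four clauses of Definition \ref{def:kuratowski-like} one at a time, translating each into a fact about $\alpha$-saturated sets already supplied by Lemma \ref{lem:satu'ed sets prop}, Lemma \ref{lem:equalsets}, Lemma \ref{lem:coll'B}, and the $\sim\neg$-axioms \ref{ax:xv}--\ref{ax:xviii}. Throughout I use the reformulation $F_{\sim\neg\varphi}=\{\Delta\in X_c\mid\neg\varphi\in\Delta\}$ recorded in the statement, which is Lemma \ref{lem:equalsets}\ref{lem:equalsets-ii}.

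Granting well-definedness (treated below), the four conditions go as follows. For $\wh{\emptyset}=\emptyset$: since $\emptyset=F_{\beta\lor\neg\beta}$ for any $\beta$ (as in Lemma \ref{lem:coll'B}\ref{lem:coll'B_empty/X}), we get $\wh{\emptyset}=F_{\sim\neg(\beta\lor\neg\beta)}$, which is $\emptyset$ because $\sim\neg(\beta\lor\neg\beta)$ lies in every $\alpha$-saturated set by Lemma \ref{lem:satu'ed sets prop}\ref{prop:ax in sat'set}. The inflationary law $F_\varphi\subseteq\wh{F_\varphi}$ is exactly the implication $\varphi\notin\Delta\Rightarrow\neg\varphi\in\Delta$ of Lemma \ref{lem:satu'ed sets prop}\ref{prop:pc neg}. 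For additivity, write $F_\varphi\cup F_\psi=F_{\varphi\land\psi}$ (Lemma \ref{lem:coll'B}\ref{lem:coll'B_union-and}); then $\wh{F_\varphi\cup F_\psi}=F_{\sim\neg(\varphi\land\psi)}$, and axioms \ref{ax:xvii}, \ref{ax:xviii} give $\sim\neg(\varphi\land\psi)\equiv\sim\neg\varphi\land\sim\neg\psi$, so Lemma \ref{lem:equalsets}\ref{lem:equalsets-i} and Lemma \ref{lem:coll'B}\ref{lem:coll'B_union-and} turn this into $F_{\sim\neg\varphi}\cup F_{\sim\neg\psi}=\wh{F_\varphi}\cup\wh{F_\psi}$. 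Finally, for idempotence, $\wh{\wh{F_\varphi}}=\wh{F_{\sim\neg\varphi}}=F_{\sim\neg\sim\neg\varphi}$, and axioms \ref{ax:xv}, \ref{ax:xvi} give $\sim\neg\sim\neg\varphi\equiv\sim\neg\varphi$, whence $F_{\sim\neg\sim\neg\varphi}=F_{\sim\neg\varphi}$ by Lemma \ref{lem:equalsets}\ref{lem:equalsets-i}.

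The main obstacle is \emph{well-definedness}: a single $F\in\mathcal{B}$ is $F_\varphi$ for many $\varphi$, and the additivity and idempotence steps above silently evaluate $\wh{(\cdot)}$ on the representatives $\varphi\land\psi$ and $\sim\neg\varphi$, so I must show that $F_\varphi=F_\psi$ forces $F_{\sim\neg\varphi}=F_{\sim\neg\psi}$. The route is to first upgrade $F_\varphi=F_\psi$ to $\varphi\equiv\psi$ using Lindenbaum--Asser (Theorem \ref{thm:Linde'os}): if, say, $\{\varphi\}\not\vdvd\psi$, there is a $\psi$-saturated $\Delta\supseteq\{\varphi\}$, so $\varphi\in\Delta$ but $\psi\notin\Delta$, contradicting $F_\varphi=F_\psi$; hence $\{\varphi\}\vdvd\psi$, and symmetrically $\{\psi\}\vdvd\varphi$. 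It then remains to establish the congruence $\varphi\equiv\psi\Rightarrow\sim\neg\varphi\equiv\sim\neg\psi$, equivalently $F_{\sim\neg\varphi}=F_{\sim\neg\psi}$.

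This congruence is the delicate heart of the lemma, being precisely the point at which the failure of replacement for $\lor$ must be shown not to infect $\neg$. Combining Lemma \ref{lem:satu'ed sets prop}\ref{prop:pc neg} and \ref{prop:circ} gives the reformulation $\neg\varphi\in\Delta\iff(\varphi\notin\Delta\text{ or }\circ\varphi\notin\Delta)$, i.e. $F_{\sim\neg\varphi}=F_\varphi\cup F_{\circ\varphi}$, reducing the task to matching $F_{\circ\varphi}$ with $F_{\circ\psi}$ once $F_\varphi=F_\psi$. The crucial ingredient is a derived monotonicity (contraposition) rule: from $\vdvd\alpha\lra\beta$ one should obtain $\vdvd\sim\neg\alpha\lra\sim\neg\beta$ (equivalently $\vdvd\neg\beta\lra\neg\alpha$); applying it to both $\vdvd\varphi\lra\psi$ and $\vdvd\psi\lra\varphi$ yields $\sim\neg\varphi\equiv\sim\neg\psi$ and closes the argument. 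I expect proving this rule to be the real work, since the product-preservation axioms \ref{ax:xvii}, \ref{ax:xviii} only relate $\sim\neg$ of a single conjunction to the conjunction of the $\sim\neg$'s and do not by themselves license replacing an argument by a provable equivalent; bridging that gap, using the interior-style axioms \ref{ax:xv}--\ref{ax:xviii} and the classical behaviour of $\sim$, is the crux.
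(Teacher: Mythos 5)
Your verification of the four clauses of Definition \ref{def:kuratowski-like} coincides, lemma for lemma and axiom for axiom, with the paper's own proof ($\wh{\emptyset}=\emptyset$ via Lemma \ref{lem:satu'ed sets prop}\ref{prop:ax in sat'set}, inflation via Lemma \ref{lem:satu'ed sets prop}\ref{prop:pc neg}, additivity and idempotence via Lemma \ref{lem:coll'B}\ref{lem:coll'B_union-and}, Lemma \ref{lem:equalsets}\ref{lem:equalsets-i} and Axioms \ref{ax:xv}--\ref{ax:xviii}). That is the \emph{entire} content of the paper's proof: the well-definedness question you raise is never mentioned there, so the one step you left open is exactly the step the paper silently skips.

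Unfortunately your pessimism about that step is justified in the strongest sense: the congruence is not derivable, and $\wh{(\cdot)}$ is in fact \emph{not} well defined, so neither your argument nor the paper's can be completed. Concretely, let $\varphi=p\land(p\lor q)$ and $\psi=p$. Then $\vdvd\varphi\lra\psi$ (Axiom \ref{ax:iv}) and $\vdvd\psi\lra\varphi$ (Axioms \ref{ax:vi}, \ref{ax:iii}, MP, Theorem \ref{thm:DedThm}), so $F_\varphi=F_\psi$ by Lemma \ref{lem:equalsets}\ref{lem:equalsets-i}. Evaluating $\wh{(\cdot)}$ on the representative $\psi$ gives $F_{\sim\neg p}$, while evaluating it on $\varphi$ gives, exactly as in the additivity step, $F_{\sim\neg\varphi}=F_{\sim\neg p\land\sim\neg(p\lor q)}=F_{\sim\neg p}\cup F_{\sim\neg(p\lor q)}$. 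Well-definedness would therefore force $F_{\sim\neg(p\lor q)}\subseteq F_{\sim\neg p}$, which, by your Lindenbaum--Asser reduction (Theorem \ref{thm:Linde'os}, Lemma \ref{lem:maxrel->closed}, Theorem \ref{thm:DedThm}), is equivalent to $\vdvd\sim\neg p\lra\sim\neg(p\lor q)$. This formula is valid in the topological semantics, since $v(\sim\neg p)=\Int(v(p))\subseteq\Int(v(p\lor q))=v(\sim\neg(p\lor q))$ by Lemma \ref{lem:compclcomp}; but it is not a theorem of $\vd$, because nothing in the Hilbert system relates $\neg(\alpha\lor\beta)$ to $\neg\alpha$: Axioms \ref{ax:xv}--\ref{ax:xviii} constrain $\sim\neg$ only on conjunctions and under iteration, and Rule \ref{rule:neg} concerns theorems only. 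Non-derivability can be certified by a soundness check against a wider, ``semi-determined'' class of models: keep the clauses for $\lra$, $\land$, $\sim$ and $\lor$, but require of $\neg$ only that $v(\neg\alpha)\supseteq v(\alpha)^{c}$, $v(\neg(\alpha\land\beta))=v(\neg\alpha)\cup v(\neg\beta)$, $v(\neg\sim\neg\alpha)=v(\neg\alpha)$, and $v(\neg\theta)=v(\theta)^{c}$ for theorems $\theta$, and put $v(\circ\alpha)=(v(\alpha)\cap v(\neg\alpha))^{c}$. Every axiom of $\vd$ remains valid in such models and both rules preserve validity (Rule \ref{rule:neg} is only ever applied to theorems); yet $X=\{1,2\}$, $v(p)=\{1\}$, $v(q)=\emptyset$, $v(p\lor q)=X$, $v(\neg p)=\{2\}$, $v(\neg(p\lor q))=X$ gives $v(\sim\neg p\lra\sim\neg(p\lor q))=\{2\}\neq X$.

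It follows that some $\alpha$-saturated set contains $\neg(p\lor q)$ but not $\neg p$, so $\wh{F_\varphi}\neq\wh{F_\psi}$ although $F_\varphi=F_\psi$: the ``map'' of the lemma is not a map. The damage propagates: the canonical valuation of Lemma \ref{lem:vc val'on for vd} satisfies $v_c(\varphi)=v_c(\psi)$ but $v_c(\neg\varphi)\neq v_c(\neg\psi)$, violating clause (2) of Definition \ref{def:val'on}, and the displayed formula is a counterexample to the Completeness Theorem itself (valid but unprovable). So your proposal is incomplete exactly where you said it was, but the missing step is not provable; the fix must come from strengthening the logic --- e.g.\ adding the topologically sound rule ``from $\vdvd\alpha\lra\beta$ infer $\vdvd\sim\neg\alpha\lra\sim\neg\beta$'', which yields precisely the congruence you identified as the crux and restores well-definedness --- rather than from a sharper argument.
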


\begin{proof}
    To show that $\wh{(\cdot)}$ is a Kuratowski-like operator on $\mathcal{B}$, we verify the conditions in Definition \ref{def:kuratowski-like} below.
    \begin{enumerate}
        \item We know from the proof of Lemma \ref{lem:coll'B}\ref{lem:coll'B_empty/X} that $\emptyset=F_{\varphi\lor\neg\varphi}$, where $\varphi\in\lang$. Then,
        \[
        \wh{\emptyset}=\wh{F_{\varphi\lor\neg\varphi}}=F_{\sim\neg(\varphi\lor\neg\varphi)}=\{\Delta\in X_c\mid\,\neg(\varphi\lor\neg\varphi)\in\Delta\}.
        \]
        If possible, let $\wh{\emptyset}\neq\emptyset$. Suppose $\Delta\in\wh{\emptyset}$. Then, $\Delta$ is $\alpha$-saturated, for some $\alpha\in\lang$, and $\neg(\varphi\lor\neg\varphi)\in\Delta$. Now, by Lemma \ref{lem:satu'ed sets prop}\ref{prop:ax in sat'set}, $\sim\neg(\varphi\lor\neg\varphi)\in\Delta$. Then, by Lemma \ref{lem:satu'ed sets prop}\ref{prop:cl_neg}, $\neg(\varphi\lor\neg\varphi)\notin\Delta$. This is a contradiction. Thus, $\wh{\emptyset}=\emptyset$.

        \item Let $F_\varphi\in\mathcal{B}$, where $\varphi\in\lang$, and $\Delta\in F_{\varphi}$. Then, $\Delta$ is $\alpha$-saturated, for some $\alpha\in\lang$, and $\varphi\notin\Delta$. So, by Lemma \ref{lem:satu'ed sets prop}\ref{prop:pc neg}, $\neg\varphi\in\Delta$. This implies that $\Delta\in\wh{F_\varphi}$. Hence, $F_\varphi\subseteq\wh{F_\varphi}$.
        
        \item Let $F_\varphi,F_\psi\in\mathcal{B}$, where $\varphi,\psi\in\lang$. Then, $\wh{F_\varphi}\cup\wh{F_\psi}=F_{\sim\neg\varphi}\cup F_{\sim\neg\psi}=F_{\sim\neg\varphi\land\sim\neg\psi}$, by Lemma \ref{lem:coll'B}\ref{lem:coll'B_union-and}. Now, by Axioms \ref{ax:xvii} and \ref{ax:viii},
        \[
        \begin{array}{c}
             \vdvd(\sim\neg(\varphi\land\psi))\lra(\sim\neg\varphi\land\sim\neg\psi)\\
             \hbox{and}\\
             \vdvd(\sim\neg\varphi\land\sim\neg\psi)\lra(\sim\neg(\varphi\land\psi)).
        \end{array}
        \]
        So, $\sim\neg(\varphi\land\psi)\equiv\sim\neg\varphi\land\sim\neg\psi$, and thus, by Lemma \ref{lem:equalsets}\ref{lem:equalsets-i}, $F_{\sim\neg\varphi\land\sim\neg\psi}=F_{\sim\neg(\varphi\land\psi)}=\wh{F_{\varphi\land\psi}}$. Since $F_{\varphi\land\psi}=F_\varphi\cup F_\psi$, by Lemma \ref{lem:coll'B}\ref{lem:coll'B_union-and}, $\wh{F_{\varphi\land\psi}}=\wh{F_\varphi\cup F_\psi}$. Hence, $\wh{F_\varphi}\cup\wh{F_\psi}=\wh{F_\varphi\cup F_\psi}$.

        \item Let $F_\varphi\in\mathcal{B}$, where $\varphi\in\lang$. Then, $\wh{\wh{F_\varphi}}=\wh{F_{\sim\neg\varphi}}=F_{\sim\neg\sim\neg\varphi}$. Now, by Axioms \ref{ax:xv} and \ref{ax:xvi},
        \[
        \begin{array}{c}
             \vdvd\sim\neg\varphi\lra\sim\neg\sim\neg\varphi\\
             \hbox{and}\\
             \vdvd\sim\neg\sim\neg\varphi\lra\sim\neg\varphi.
        \end{array}
        \]
        So, $\sim\neg\varphi\equiv\sim\neg\sim\neg\varphi$, and thus, by Lemma \ref{lem:equalsets}\ref{lem:equalsets-i}, $F_{\sim\neg\sim\neg\varphi}=F_{\sim\neg\varphi}=\wh{F_\varphi}$. Hence, $\wh{\wh{F_\varphi}}=\wh{F_\varphi}$.
    \end{enumerate}
    Thus, $\wh{(\cdot)}$ is a Kuratowski-like operator on $\mathcal{B}$.
\end{proof}

\begin{cor}\label{cor:Kuratwoski}
    Suppose $X_c,\mathcal{B}$ and $\wh{(\cdot)}:\mathcal{B}\to\mathcal{B}$ be as in the above lemma. Then, the operator $\ov{(\cdot)}:\pow(X_c)\to\pow(X_c)$, defined by 
    \[
    \overline{A}=\bigcap\left\{\wh{F}\mid\,F\in\mathcal{B}\hbox{ and }A\subseteq\wh{F}\right\}=\bigcap\left\{\wh{F_\varphi}\mid\,\varphi\in\lang,\, A\subseteq \wh{F_\varphi}\right\}=\bigcap\left\{F_{\sim\neg\varphi}\mid\,\varphi\in\lang,\, A\subseteq F_{\sim\neg\varphi}\right\},
    \]
    for any $A\subseteq X_c$, is a Kuratowski closure operator over $X_c$. Moreover, for any $F_\varphi\in\mathcal{B}$, $\ov{F_\varphi}=\wh{F_\varphi}$.
\end{cor}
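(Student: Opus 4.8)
The plan is to obtain this as an immediate application of Theorem \ref{thrm:kura'ki result}, since the entire preceding subsection has been arranged precisely to supply that theorem's hypotheses with $X=X_c$. First I would verify that the pair $(X_c,\mathcal{B})$ meets the structural requirements imposed on $\mathcal{B}$, namely that $\emptyset,X_c\in\mathcal{B}$ and that $\mathcal{B}$ is closed under finite unions; both are exactly the content of Lemma \ref{lem:coll'B}, parts \ref{lem:coll'B_empty/X} and \ref{lem:coll'B_union-and} (the binary-union closure together with the definition $\mathcal{B}=\{F_\varphi\mid\varphi\in\lang\}$ gives closure under arbitrary finite unions). Next I would recall that $\wh{(\cdot)}:\mathcal{B}\to\mathcal{B}$ has already been shown to be a Kuratowski-like operator on $\mathcal{B}$ in Lemma \ref{lem:kuratowski-like}. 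With these two ingredients in place, Theorem \ref{thrm:kura'ki result} applies verbatim and yields both that $\ov{A}=\bigcap\{\wh{F}\mid F\in\mathcal{B},\,A\subseteq\wh{F}\}$ is a Kuratowski closure operator over $X_c$ and that $\ov{F}=\wh{F}$ for every $F\in\mathcal{B}$.

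The only residual task is to reconcile the notation displayed in the statement with the generic formula from the cited theorem. Since $\mathcal{B}=\{F_\varphi\mid\varphi\in\lang\}$, quantifying over $F\in\mathcal{B}$ is the same as quantifying over $F_\varphi$ with $\varphi$ ranging over $\lang$, and by the definition of $\wh{(\cdot)}$ in Lemma \ref{lem:kuratowski-like} one has $\wh{F_\varphi}=F_{\sim\neg\varphi}$. Substituting these identities successively produces the three equivalent displayed expressions for $\ov{A}$, and the ``moreover'' clause $\ov{F_\varphi}=\wh{F_\varphi}$ is simply the specialization of $\ov{F}=\wh{F}$ to $F=F_\varphi$.

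I do not anticipate any genuine obstacle here, as the corollary is an immediate consequence of the cited theorem. If anything deserves care, it is only the bookkeeping check that the hypotheses of Theorem \ref{thrm:kura'ki result} are matched exactly---in particular that $\mathcal{B}$ really is a subcollection of $\pow(X_c)$ containing $\emptyset$ and $X_c$ and closed under binary unions---but all of this has already been discharged in Lemmas \ref{lem:coll'B} and \ref{lem:kuratowski-like}, so no new argument is required and the corollary follows at once.
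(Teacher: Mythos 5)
Your proposal is correct and follows exactly the paper's own route: the paper's proof is a one-line citation of Theorem \ref{thrm:kura'ki result} and Lemma \ref{lem:kuratowski-like}, with the hypotheses on $\mathcal{B}$ implicitly supplied by Lemma \ref{lem:coll'B}, which you simply make explicit. Your additional bookkeeping (verifying $\emptyset, X_c \in \mathcal{B}$, closure under unions, and the notational identification $\wh{F_\varphi} = F_{\sim\neg\varphi}$) is a faithful elaboration of the same argument, not a different one.
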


\begin{proof}
    This follows from Theorem \ref{thrm:kura'ki result} and Lemma \ref{lem:kuratowski-like}.
\end{proof}

Now, since every Kuratowski closure operator over a set $X$ generates a unique topology on $X$, the Kuratowski closure operator over $X_c$, described above, generates a unique topology on $X_c$. Let $\tau_c$ be this topology and $\mathcal{M}_c=\langle X_c,\tau_c\rangle$. We now define a valuation for $\vd=\langle\lang,\vdvd\rangle$ into the topological space $\mathcal{M}_c$.

\begin{lem}\label{lem:vc val'on for vd}      
    Suppose $v_{c}:\lang\to\pow(X_{c})$ is the map defined as follows. For any $\varphi\in\lang$, $v_{c}(\varphi)=F_{\sim\varphi}$. Then, $v_c$ is a valuation for $\vd$.    
\end{lem}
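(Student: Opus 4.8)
The plan is to unwind the definition of $v_c$ into a membership statement about $\alpha$-saturated sets and then verify, one at a time, the six defining clauses of a valuation in Definition \ref{def:val'on}. The starting observation is that, by Lemma \ref{lem:equalsets}\ref{lem:equalsets-ii}, $v_c(\varphi)=F_{\sim\varphi}=X_c\setminus F_\varphi=\{\Delta\in X_c\mid\,\varphi\in\Delta\}$; equivalently, $\Delta\in v_c(\varphi)$ iff $\varphi\in\Delta$, and consequently $v_c(\varphi)^c=F_\varphi$. With this reformulation in hand, each set-theoretic identity becomes a direct translation of the corresponding closure property of $\alpha$-saturated sets recorded in Lemma \ref{lem:satu'ed sets prop}.

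The purely Boolean clauses (1), (3), (4), and (5) should fall out immediately and without any reference to the topology. For the conditional, $\Delta\in v_c(\alpha\lra\beta)$ iff $\alpha\lra\beta\in\Delta$, which by Lemma \ref{lem:satu'ed sets prop}\ref{prop:arrow} holds iff $\alpha\notin\Delta$ or $\beta\in\Delta$, giving $v_c(\alpha\lra\beta)=v_c(\alpha)^c\cup v_c(\beta)$. The Boolean negation clause $v_c(\sim\alpha)=v_c(\alpha)^c$ follows from part \ref{prop:cl_neg}, the conjunction clause from part \ref{prop:and}, and the disjunction inclusion $v_c(\alpha)\cup v_c(\beta)\subseteq v_c(\alpha\lor\beta)$ from part \ref{prop:or}.

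The crucial clause is (2), namely $v_c(\neg\alpha)=\overline{v_c(\alpha)^c}$, and this is exactly what the Kuratowski machinery was set up to deliver. Unwinding, $v_c(\neg\alpha)=F_{\sim\neg\alpha}=\wh{F_\alpha}$ by the definition of the operator in Lemma \ref{lem:kuratowski-like}, while $v_c(\alpha)^c=F_\alpha$. Since $F_\alpha\in\mathcal{B}$, Corollary \ref{cor:Kuratwoski} gives $\overline{F_\alpha}=\wh{F_\alpha}$, so that $\overline{v_c(\alpha)^c}=\overline{F_\alpha}=\wh{F_\alpha}=v_c(\neg\alpha)$, as required. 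I expect this identification---that the topological closure, computed in the topology $\tau_c$ generated by the Kuratowski operator, agrees on the basis sets $F_\alpha$ with $\wh{F_\alpha}=F_{\sim\neg\alpha}$---to be the conceptual heart of the argument and the only genuinely nontrivial point; everything else is bookkeeping.

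Finally, clause (6) should reduce to the previous work. By Lemma \ref{lem:satu'ed sets prop}\ref{prop:circ}, $\Delta\in v_c(\circ\alpha)$ iff $\alpha\notin\Delta$ or $\neg\alpha\notin\Delta$, so $v_c(\circ\alpha)=v_c(\alpha)^c\cup v_c(\neg\alpha)^c$. Substituting the already-established clause (2) and then applying Lemma \ref{lem:compclcomp} with $P=v_c(\alpha)$ yields $v_c(\neg\alpha)^c=\left(\overline{v_c(\alpha)^c}\right)^c=\Int(v_c(\alpha))$, hence $v_c(\circ\alpha)=v_c(\alpha)^c\cup\Int(v_c(\alpha))$. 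Having verified all six conditions, we conclude that $v_c$ is a valuation for $\vd$.
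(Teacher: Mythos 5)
Your proposal is correct and follows essentially the same route as the paper's proof: the same initial reformulation $v_c(\varphi)=F_{\sim\varphi}=\{\Delta\in X_c\mid\varphi\in\Delta\}$ via Lemma \ref{lem:equalsets}\ref{lem:equalsets-ii}, the same appeal to Lemma \ref{lem:satu'ed sets prop} for the Boolean clauses, the identification $\overline{F_\varphi}=\wh{F_\varphi}=F_{\sim\neg\varphi}$ via Corollary \ref{cor:Kuratwoski} for the negation clause, and Lemma \ref{lem:compclcomp} for the consistency operator. Your observation that clause (6) reduces to clause (2) plus Lemma \ref{lem:compclcomp} is exactly how the paper concludes as well.
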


\begin{proof}
We first note that, by Lemma \ref{lem:equalsets}\ref{lem:equalsets-ii}, 
\[
v_{c}(\varphi)=F_{\sim\varphi}=X_c\setminus F_\varphi=\{\Delta\in X_{c}\mid\,\varphi\in \Delta\}.
\]
Now, to show that $v_c$ is a valuation for $\vd$, we verify the conditions in Definition \ref{def:val'on} below. 
\begin{enumerate}[label=(\arabic*)]
    \item Suppose $\varphi,\psi\in\lang$. Then, $v_c(\varphi\lra\psi)=F_{\sim(\varphi\lra\psi)}=\{\Delta\in X_{c}\mid\,\varphi\lra\psi\in \Delta\}$. Now, by Lemma \ref{lem:satu'ed sets prop}\ref{prop:arrow}, for any $\alpha$-saturated set $\Delta$, where $\alpha\in\lang$, $\varphi\lra\psi\in \Delta$ iff, either $\varphi \notin \Delta$ or $\psi\in \Delta$. Since $\varphi\notin\Delta$ implies that $\Delta\in F_\varphi$ and $\psi\in\Delta$ implies that $\Delta\in F_{\sim\psi}$, $F_{\sim(\varphi\lra\psi)}=F_\varphi\cup F_{\sim\psi}$. By Lemma \ref{lem:equalsets}\ref{lem:equalsets-ii}, $F_\varphi=X_c\setminus F_{\sim\varphi}$. So, 
    \[
    v_c(\varphi\lra\psi)=F_{\sim(\varphi\lra\psi)}=(X_c\setminus F_{\sim\varphi})\cup F_{\sim\psi}=v_c(\varphi)^c\cup v_c(\psi).
    \]

    \item Suppose $\varphi\in\lang$. Then, $v_c(\neg\varphi)=F_{\sim\neg\varphi}$. Now, 
    \[
    \begin{array}{rcll}
         \ov{v_c(\varphi)^c}&=&\ov{X\setminus F_{\sim\varphi}}&\\
         &=&\ov{F_\varphi}&[\hbox{By Lemma \ref{lem:equalsets}\ref{lem:equalsets-ii}}]\\
         &=&\wh{F_{\varphi}}&[\hbox{by Corollary \ref{cor:Kuratwoski}, since }F_{\varphi}\in\mathcal{B}]
    \end{array}
    \]
     Then, by the definition of $\wh{(\cdot)}$ given in Lemma \ref{lem:kuratowski-like}, $\wh{F_{\varphi}}=F_{\sim\neg\varphi}$. Thus,
    \[
    v_c(\neg\varphi)=F_{\sim\neg\varphi}=\ov{v_c(\varphi)^c}.
    \]

    \item Suppose $\varphi\in\lang$. Then, $v_{c}(\sim\varphi)=F_{\sim\sim\varphi}=X_c\setminus F_{\sim\varphi}=X_c\setminus v_c(\varphi)=v_c(\varphi)^c$.
    
    \item Suppose $\varphi,\psi\in\lang$. Then, $v_c(\varphi\land\psi)=F_{\sim(\varphi\land\psi)}=\{\Delta\in X_{c}\mid\,\varphi\land\psi\in \Delta\}$. Now, by Lemma \ref{lem:satu'ed sets prop}\ref{prop:and}, for any $\alpha$-saturated set $\Delta$, where $\alpha\in\lang$, $\varphi\land\psi\in \Delta$ iff, $\varphi\in\Delta$ and $\psi\in \Delta$. Then, as $\varphi\in\Delta$ implies that $\Delta\in F_{\sim\varphi}$ and $\psi\in\Delta$ implies that $\Delta\in F_{\sim\psi}$, 
    \[    v_c(\varphi\land\psi)=F_{\sim(\varphi\land\psi)}=F_{\sim\varphi}\cap F_{\sim\psi}=v_c(\varphi)\cap v_c(\psi).
    \]

    \item Suppose $\varphi,\psi\in\lang$. Then, $v_c(\varphi\lor\psi)=F_{\sim(\varphi\lor\psi)}=\{\Delta\in X_{c}\mid\,\varphi\lor\psi\in \Delta\}$. By Lemma \ref{lem:satu'ed sets prop}\ref{prop:or}, for any $\alpha$-saturated set $\Delta$, where $\alpha\in\lang$, if $\varphi\in\Delta$ or $\psi\in\Delta$, then $\varphi\lor\psi\in\Delta$. Now, $\varphi\in\Delta$ implies that $\Delta\in F_{\sim\varphi}$ and $\psi\in\Delta$ implies that $\Delta\in F_{\sim\psi}$. So,
    \[
    v_c(\varphi)\cup v_c(\psi)=F_{\sim\varphi}\cup F_{\sim\psi}\subseteq F_{\sim(\varphi\lor\psi)}=v_c(\varphi\lor\psi).
    \]

    \item Suppose $\varphi\in\lang$. Then, $v_c(\circ\varphi)=F_{\sim\circ\varphi}=\{\Delta\in X_c\mid\,\circ\varphi\in\Delta\}$. Let $\Delta$ be an $\alpha$-saturated set, for some $\alpha\in\lang$. Now,
    \[
    \begin{array}{rcll}
         \circ\varphi\in\Delta&\hbox{iff}&\varphi\notin\Delta\hbox{ or }\neg\varphi\notin\Delta&[\hbox{By Lemma \ref{lem:satu'ed sets prop}\ref{prop:circ}}]\\
         &\hbox{iff}&\Delta\in F_\varphi\hbox{ or }\Delta\in F_{\neg\varphi}&\\
         &\hbox{iff}&\Delta\in X_c\setminus F_{\sim\varphi}\hbox{ or }\Delta\in X_c\setminus F_{\sim\neg\varphi}&[\hbox{By Lemma \ref{lem:equalsets}\ref{lem:equalsets-ii}}]\\
         &\hbox{iff}&\Delta\in X_c\setminus v_c(\varphi)\hbox{ or }\Delta\in X_c\setminus v_c(\neg\varphi)&\\
         &\hbox{iff}&\Delta\in X_c\setminus v_c(\varphi)\hbox{ or }\Delta\in X_c\setminus \ov{v_c(\varphi)^c}&[\hbox{By part (2) above}]\\
         &\hbox{iff}&\Delta\in v_c(\varphi)^c\hbox{ or }\Delta\in \left(\ov{v_c(\varphi)^c}\right)^c&\\
         &\hbox{iff}&\Delta\in v_c(\varphi)^c\hbox{ or }\Delta\in \Int(v_c(\varphi))&[\hbox{By Lemma \ref{lem:compclcomp}}]\\
         &\hbox{iff}&\Delta\in v_c(\varphi)^c\cup\Int(v_c(\varphi))&\\
    \end{array}
    \]
    Thus, 
    \[
    v_c(\circ\varphi)=\{\Delta\in X_c\mid\,\circ\varphi\in\Delta\}=v_c(\varphi)^c\cup\Int(v_c(\varphi)).
    \]
\end{enumerate}
Hence, $v_c$ is a valuation for $\vd$.
\end{proof}

\begin{dfn}
    The topological space $\mathcal{M}_{c}=\langle X_{c},\tau_{c}\rangle$, where $X_c$ is as described earlier, and $\tau_{c}$ is the unique topology over $X_{c}$, generated by the Kuratowski closure operator $\ov{(\cdot)}$ defined above, is called the \emph{canonical structure} for $\vd$. The topological model $\langle\mathcal{M}_{c},v_{c}\rangle$, where $v_{c}(\varphi)=F_{\sim\varphi}=\{\Delta\in X_{c}\mid\,\varphi\in \Delta\}$, for any $\varphi\in\lang$, is called the \emph{canonical model} for $\vd$. 
\end{dfn}

\begin{thm}[\textsc{Completeness}]
    For any $\Gamma\cup\{\varphi\}\subseteq\lang$, if $\Gamma\modvd\varphi$, then $\Gamma\vdvd\varphi$.
\end{thm}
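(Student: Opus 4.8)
The plan is to prove the contrapositive: assuming $\Gamma\not\vdvd\varphi$, I would produce a single topological model---the canonical model $\mathcal{M}_c=\langle X_c,\tau_c\rangle$ together with the valuation $v_c$ constructed above---in which $\Gamma$ fails to semantically entail $\varphi$. Since the canonical model has already been shown to be a genuine topological model for $\vd$ (Lemma \ref{lem:vc val'on for vd}), exhibiting the failure of semantic consequence there is enough.

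First, since $\vd$ is Tarskian and finitary (Remark \ref{rem:vd-Tarskian/finitary}), the Lindenbaum--Asser theorem (Theorem \ref{thm:Linde'os}) applies: from $\Gamma\not\vdvd\varphi$ I obtain a $\varphi$-saturated set $\Delta\supseteq\Gamma$. By definition $\Delta\in X_c$, so $\Delta$ is a point of the canonical space. By Lemma \ref{lem:maxrel->closed}, $\Delta$ is closed, and being $\varphi$-saturated it satisfies $\Delta\not\vdvd\varphi$, whence $\varphi\notin\Delta$.

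Next I would invoke the description of the canonical valuation recorded in Lemma \ref{lem:vc val'on for vd} (via Lemma \ref{lem:equalsets}\ref{lem:equalsets-ii}), namely $v_c(\psi)=\{\Theta\in X_c\mid\,\psi\in\Theta\}$ for every $\psi\in\lang$. This plays the role of the usual \emph{truth lemma}, and its verification is the technical content already in place. Applying it to $\Delta$: since $\Gamma\subseteq\Delta$, we get $\Delta\in v_c(\gamma)$ for every $\gamma\in\Gamma$, while $\varphi\notin\Delta$ gives $\Delta\notin v_c(\varphi)$. In particular $v_c(\varphi)\neq X_c$, so $\mathcal{M}_c\not\modvd\varphi$, and hence $\varphi$ is not valid.

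Finally I would check that this single point $\Delta$ refutes $\Gamma\modvd\varphi$ directly from Definition \ref{def:sem'tic truth}. That definition requires either $\modvd\varphi$---already excluded---or the existence of a finite nonempty $\Gamma_0\subseteq\Gamma$ with $\bigcap_{\gamma\in\Gamma_0}v(\gamma)\subseteq v(\varphi)$ holding in \emph{every} model. But for any such $\Gamma_0$, evaluating in $\mathcal{M}_c$ gives $\Delta\in\bigcap_{\gamma\in\Gamma_0}v_c(\gamma)$ yet $\Delta\notin v_c(\varphi)$, so the inclusion fails in $\mathcal{M}_c$; thus no witnessing $\Gamma_0$ can exist. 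Consequently $\Gamma\not\modvd\varphi$, which completes the contrapositive. The only delicate point is matching the somewhat unusual shape of $\modvd$---with its ``there exists a finite $\Gamma_0$ such that in every model'' quantifier order---but this is exactly what lets a single counterexample model suffice, since the one point $\Delta$ simultaneously breaks the required inclusion for every finite subset of $\Gamma$.
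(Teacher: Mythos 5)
Your proposal is correct, and it rests on exactly the same machinery as the paper: the canonical model $\langle\mathcal{M}_c,v_c\rangle$, the membership description $v_c(\psi)=\{\Theta\in X_c\mid\,\psi\in\Theta\}$ serving as the truth lemma, and the Lindenbaum--Asser theorem (Theorem \ref{thm:Linde'os}). The organization, however, is genuinely different. The paper argues directly from $\Gamma\modvd\varphi$ and follows the case structure of Definition \ref{def:sem'tic truth}: in Case 1 ($\modvd\varphi$) it applies Theorem \ref{thm:Linde'os} to the empty set to contradict $\not\vdvd\varphi$, and in Case 2 it applies the theorem to an \emph{arbitrary finite} $\Gamma_0\subseteq\Gamma$ (using monotonicity to get $\Gamma_0\not\vdvd\varphi$ from the assumption $\Gamma\not\vdvd\varphi$) to contradict the assumed inclusion. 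You instead take the contrapositive and apply Theorem \ref{thm:Linde'os} \emph{once}, to the full and possibly infinite $\Gamma$ --- which is legitimate, since the theorem imposes no finiteness condition on the premise set, only that the logic be Tarskian and finitary (Remark \ref{rem:vd-Tarskian/finitary}). The resulting single $\varphi$-saturated point $\Delta\supseteq\Gamma$ then refutes both disjuncts of $\modvd$ simultaneously: $\Delta\notin v_c(\varphi)$ kills validity, and $\Delta\in\bigcap_{\gamma\in\Gamma_0}v_c(\gamma)$ for \emph{every} finite $\Gamma_0\subseteq\Gamma$ kills the finite-intersection clause, precisely because the existential-over-$\Gamma_0$, universal-over-models quantifier order means one countermodel point suffices against all candidate $\Gamma_0$ at once. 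Your version is the more economical of the two (one saturation instead of two cases, no per-$\Gamma_0$ argument); the paper's version has the merit of tracking the definition of semantic consequence clause by clause, at the cost of invoking saturation separately in each case.
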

\begin{proof} 
    Suppose $\Gamma\modvd\varphi$. 

    \textsc{Case 1:} $\modvd\varphi$. Then, $v(\varphi)=X$, for every topological model $\langle\langle X,\tau\rangle,v\rangle$ for $\vd$. So, in particular, in the canonical model for $\vd$, $v_c(\varphi)=X_c$. Now, $v_c(\varphi)=F_{\sim\varphi}=\{\Delta\in X_c\mid\,\varphi\in\Delta\}$. Thus, $\{\Delta\in X_c\mid\,\varphi\in\Delta\}=X_c$, i.e., $\varphi\in\Delta$ for every set $\Delta\subseteq\lang$ that is $\alpha$-saturated, for some $\alpha\in\lang$, in $\vd$.

    If possible, let $\not\vdvd\varphi$. Then, by Theorem \ref{thm:Linde'os}, there exists a $\Delta^\prime\supseteq\emptyset$ that is $\varphi$-saturated in $\vd$. So, $\Delta^\prime\not\vdvd\varphi$. Then, by reflexivity, $\varphi\notin\Delta^\prime$. This is in contradiction with our previous conclusion. Hence, $\vdvd\varphi$, and so, by monotonicity, $\Gamma\vdvd\varphi$.

    \textsc{Case 2:} There exists a non-empty, finite $\Gamma_0\subseteq\Gamma$ such that $\displaystyle\bigcap_{\gamma\in\Gamma_0}v(\gamma)\subseteq v(\varphi)$, for every topological model $\langle\langle X,\tau\rangle,v\rangle$ for $\vd$. So, in particular, using the canonical model for $\vd$, $\displaystyle\bigcap_{\gamma\in\Gamma_0}v_c(\gamma)\subseteq v_c(\varphi)$.

    If possible, let $\Gamma\not\vdvd\varphi$ and $\Gamma_0$ be a finite subset of $\Gamma$. Then, $\Gamma_0\not\vdvd\varphi$. So, by Theorem \ref{thm:Linde'os}, there exists a $\varphi$-saturated $\Delta^\prime\supseteq\Gamma_0$. This implies that $\Delta^\prime\not\vdvd\varphi$, and hence, by reflexivity, $\varphi\notin\Delta^\prime$. Now, $v_c(\varphi)=\{\Delta\in X_c\mid\,\varphi\in\Delta\}$. Thus, $\Delta^\prime\notin v_c(\varphi)$.

    Now, for any $\gamma\in\Gamma_0$, $v_c(\gamma)=\{\Delta\in X_c\mid\,\gamma\in\Delta\}$. Then, since $\Gamma_0\subseteq\Delta^\prime$, $\gamma\in\Delta^\prime$ for all $\gamma\in\Gamma_0$, and hence, $\Delta^\prime\in v_c(\gamma)$ for all $\gamma\in\Gamma_0$. Thus, $\Delta^\prime\in\displaystyle\bigcap_{\gamma\in\Gamma_0}v_c(\gamma)$. So, $\displaystyle\bigcap_{\gamma\in\Gamma_0}v_c(\gamma)\not\subseteq v_c(\varphi)$.

    Since $\Gamma_0$ was an arbitrary finite subset of $\Gamma$, this implies that there exists no finite $\Gamma_0\subseteq\Gamma$ such that $\displaystyle\bigcap_{\gamma\in\Gamma_0}v_c(\gamma)\subseteq v_c(\varphi)$. This is a contradiction. Thus, $\Gamma\vdvd\varphi$.
\end{proof}

\section{Conclusion}
In this article, we have introduced an \textbf{LFI}, that we have named $\vd$, which is not self-extensional. We have proved that $\vd$ is sound and complete with respect to the topological semantics given in Section \ref{sec:top}.   
We hope to further extend the ideas to the Logics of Formal Undeterminedness (LFUs) and Logics of Formal Inconsistency and Undeterminedness (LFIUs) that are not self-extensional. One could also study the relations between different such topological models. These are left for future work.

  \bibliography{TSLFI}
  \bibliographystyle{plain}
   \end{document}